\definecolor{gray}{gray}{0.5}
\def\R{\mathbb{R}}
\def\N{\mathbb{N}}
\def\C{\mathbb{C}}
\def\Z{\mathbb{Z}}
\def\beq{\begin{equation}}
\def\endeq{\end{equation}}
\def\mc{\mathcal}
\def\bs{\begin{split}}
\def\es{\end{split}}
\def\mc{\mathcal}
\def\bbone{{\mathbbm 1}}
\newcommand{\ci}[1]{_{{}_{\!\scriptstyle{#1}}}}
\newcommand{\Be}{\begin{equation}}
\newcommand{\Ee}{\end{equation}}
\newcommand{\Bm}{\begin{multline}}
\newcommand{\Em}{\end{multline}}
\newcommand{\Bea}{\begin{eqnarray}}
\newcommand{\Eea}{\end{eqnarray}}
\newcommand{\Beas}{\begin{eqnarray*}}
\newcommand{\Eeas}{\end{eqnarray*}}
\newcommand{\Benu}{\begin{enumerate}}
\newcommand{\Eenu}{\end{enumerate}}
\newcommand{\Bi}{\begin{itemize}}
\newcommand{\Ei}{\end{itemize}}
\def\intslash{\rlap{\kern  .32em $\mspace {.5mu}\backslash$ }\int}
\def\qsl{{\rlap{\kern  .32em $\mspace {.5mu}\backslash$ }\int_{Q_x}}}
\def\N{\mathbb N}
\def\emph#1{{\it #1 }}
\def\ga{\gamma}
\def\inn#1#2{\langle#1,#2\rangle}
\def\meas{{\text{\rm meas}}}
\def\lc{\lesssim}
\def\gc{\gtrsim}
\def\eps{\varepsilon}
\def\fM{{\mathfrak {M}}}
\def\fN{{\mathfrak {N}}}
\def\fS{{\mathfrak {S}}}
\def\bbN{{\mathbb {N}}}
\def\bbR{{\mathbb {R}}}
\def\bbZ{{\mathbb {Z}}}
\def\cC{{\mathcal {C}}}
\def\cH{{\mathcal {H}}}
\def\cK{{\mathcal {K}}}
\def\cM{{\mathcal {M}}}
\def\cN{{\mathcal {N}}}
\def\be#1{\begin{equation}\label{ #1}}
\def\endeq{\end{equation}}
\def\endal{\end{align}}
\def\bas{\begin{align*}}
\def\eas{\end{align*}}
\def\bi{\begin{itemize}}
\def\ei{\end{itemize}}
\def\eps{\varepsilon}
\def\emph#1{{\it #1}}
\def\textbf#1{{\bf #1}}
\def\beq{\begin{equation}}
\def\endeq{\end{equation}}
\def\bs{\begin{split}}
\def\es{\end{split}}
\numberwithin{equation}{section}
\theoremstyle{plain}
\newtheorem{thm}{Theorem}[section]
\newtheorem{prop}[thm]{Proposition}
\newtheorem{lem}[thm]{Lemma}
\newtheorem*{thm*}{Theorem}
\newtheorem*{conj*}{Conjecture}
\newtheorem*{openproblem*}{Open Problem}
\begin{document}
\title[Maximal functions for families of homogeneous curves]{Maximal functions associated with families of homogeneous curves: $\mathbf{L^p}$ bounds for  $\mathbf{p\boldsymbol{\le} 2}$}

\author[S. Guo \  \  \  \  \ \ J. Roos \ \ \ \ \ \ A. Seeger \ \  \ \  \ \ P.-L. Yung] {Shaoming Guo \ \ Joris Roos \ \ Andreas Seeger \ \ Po-Lam Yung}
\address{Shaoming Guo: Department of Mathematics, University of Wisconsin-Madison, 480 Lincoln Dr, Madison, WI-53706, USA}
\email{shaomingguo@math.wisc.edu}

\address{Joris Roos: Department of Mathematics, University of Wisconsin-Madison, 480 Lincoln Dr, Madison, WI-53706,USA}
\email{jroos@math.wisc.edu}

\address{Andreas Seeger: Department of Mathematics, University of Wisconsin-Madison, 480 Lincoln Dr, Madison, WI-53706, USA}
\email{seeger@math.wisc.edu}

\address{Po-Lam Yung: Department of Mathematics, The Chinese University of Hong Kong, Ma Liu Shui, Shatin, Hong Kong \\{\em and } Mathematical Sciences Institute, Australian National University, Canberra ACT 2601, Australia}
\email{plyung@math.cuhk.edu.hk \text{ and } polam.yung@anu.edu.au}

\thanks{S.G. supported in part by NSF grant 1800274.  A.S.  supported  in part by NSF grant 1764295. P.Y. was partially supported by a General Research Fund CUHK14303817 from the Hong Kong Research Grant Council, and a direct grant for research from the Chinese University of Hong Kong (4053341).}

\subjclass[2010]{42B15, 42B20, 42B25, 44A12}

\begin{abstract}
Let  $M^{(u)}$, $H^{(u)}$ be the maximal operator and Hilbert transform along the parabola $(t, ut^2) $. For  $U\subset (0,\infty)$  we  consider $L^p$ estimates for the maximal functions
	$\sup_{u\in U}|M^{(u)} f|$ and
		$\sup_{u\in U}|H^{(u)} f|$, when $1<p\le 2$. The parabolae 
	can be replaced by more general non-flat homogeneous curves.
\end{abstract}


\maketitle

\section{Introduction and statement of results}

Let  $b>1$, $u>0$, and $\gamma_b:\R\to\R$ homogeneous of degree $b$, i.e. $\gamma_b(st)=s^b \gamma_b(t)$ for $s>0$. Also suppose $\gamma_b(\pm 1)\neq 0$.
For a Schwartz function $f$ on $\R^2$ we let
\begin{align}\notag
{M}^{(u)}\! f(x)&= \sup_{R>0} \frac {1}{R} \int_{0}^{R} 
|f(x-(t,u\gamma_b(t)))| \, dt,
\\
\notag
{H}^{(u)}\! f(x)&= p.v. \int_{\R} 
f(x-(t,u\gamma_b(t)))\frac{dt}{t},
\end{align}
denote the  maximal function and Hilbert transform of $f$ along  the curve $(t,u\gamma_b(t))$.
For  an arbitrary  nonempty $U\subset (0,\infty)$ we consider the  maximal functions
\Be
\mc{M}^U \!f(x)=\sup_{u\in U} {M}^{(u)}\! f(x),\quad
\mc{H}^U \!f(x)=\sup_{u\in U} |{H}^{(u)}\! f(x)|.
\Ee 

For $2<p<\infty$ the operators $\cM^U$ are bounded on $L^p(\bbR^2)$ for all $U$; this was shown by Marletta and Ricci \cite{MR}.
For the operators $\cH^U$  a corresponding satisfactory  theorem was proved in a previous paper \cite{grsy-first} of the authors. To describe the result let
\[\fN(U)=1+\#\{n\in \bbZ: [2^n, 2^{n+1}] \cap U\neq \emptyset\}.\]
Then, for $2<p<\infty$, $\mc{H}^U $ is bounded on $L^p(\bbR^2)$ if and only if $\fN(U)$ is finite, and we have the equivalence $$c_p \le  \frac{\|\mc{H}^U\|_{L^p\to L^p}} {(\log \fN(U))^{1/2}} \le C_p, \quad 2<p<\infty,$$ with nonzero constants $c_p$, $C_p$. Moreover, for all $p>1$ we have the lower bound $\|\cH^U\|_{L^p\to L^p} \gc \sqrt {\log \fN(U)}$. The consideration of such   results in \cite{grsy-first} and in this paper has multiple motivations. 
First, there is an analogy (although not a close relation) with similar results on  maximal operators  and Hilbert transforms for   families of straight lines; here we mention  the  lower bounds by  Karagulyan \cite{Kar07}, and the currently best upper bounds for $p>2$ by  Demeter and Di Plinio \cite{DD14}. The second motivation comes from the above mentioned work by Marletta and Ricci \cite{MR} on the maximal function for $p>2$, and the third motivation comes from a {\it curved version} of the Stein-Zygmund vector-field problem concerning the $L^p$ boundedness of $M^{(u(\cdot))}$ and $H^{(u(\cdot))}$ where $x\mapsto u(x)$ is a Lipschitz function. 
In this case the  $L^p$ boundedness of $M^{(u(\cdot))}$ for the full range $1<p<\infty$ was proved  by Guo, Hickman, Lie and Roos \cite{GHLR17}, and the analogous result for $H^{(u(\cdot))}$ by  Di Plinio, Guo, Thiele and Zorin-Kranich \cite{DGTZ18}. 
We refer to 
the bibliography of \cite{grsy-first} for a list of  related  works.


Regarding the operators $\cM^U$, $\cH^U$ most satisfactory  results (except for certain lacunary sequences) were so far obtained in the range  $p>2$.  In this paper we seek to find efficient upper bounds for the $L^p$ operator norms of $\cM^U$ and $\cH^U$ in the case $1<p\le 2$. It turns out that there is a striking dichotomy between the cases $2<p<\infty$ and $1<p\le 2$. In the latter case, the operator norms of $\cM^U$ and $\cH^U$ depend on an additional quantity that involves the local behavior of the set $U$ on each dyadic interval. 
The formulation of the results, using some variant of Minkowski dimension, 
is in part motivated by 
 considerations for spherical maximal functions in the work of Seeger, Wainger, and Wright \cite{SWW95} (see also \cite{SWW97}, \cite{STW03}).

As pointed out in \cite{grsy-first}, with reference to 
\cite{STW03}, $L^p$ boundedness for $p\le 2$ fails, for both $\cM^U$ 
and $\cH^U$,  when $U=[1,2]$; therefore  some additional sparseness condition needs to be imposed.
To formulate such results let, for each  $r>0$ 
$$U^r= r^{-1}U\cap[1,2]= \{\rho\in [1,2]: r\rho\in U\}.$$
For  $0<\delta<1$ we let $N(U^r,\delta)$ 
the $\delta$--covering number of $U^r$, i.e. the minimal number of intervals of length $\delta$ needed to cover $U^r$. 
It is obvious that $\sup_{r>0} N(U^r,\delta)\lc \delta^{-1}$. 
Define
\Be\label{Kp-def}
\cK_p(U,\delta)
 = \delta^{1-\frac 1p} \sup_{r>0} N(U^r, \delta)^{\frac 1p}.
\Ee

Define
\Be
\label{pcr} p_{\mathrm{cr}}(U) = 1+ \limsup_{\delta\to 0+} \frac{ \sup_{r>0} \log N(U^r,\delta)}{\log( \delta^{-1})}
\Ee
Notice  that always $1\le p_{\mathrm{cr}}(U)\le 2$. If  $p_{\mathrm{cr}}(U)<p<2$ there exists an $\eps=\eps(p,U)>0$ such that 
$\sup_{0<\delta<1}  \delta^{-\eps} \cK_p(U,\delta)<\infty$. If $1<p<p_{\mathrm{cr}}(U)$ then there is $ \eps'=\eps'(p,U)>0 $ and a sequence $\delta_n\to 0$ such that
$\limsup_n \delta_n^{ \eps'} \cK_p(U,\delta_n)>0$.

\begin{thm}\label{pmaxthm}

Let 
$1<p\le 2$.

(i) If $p_{\mathrm{cr}}(U)<p\le 2$ then $\cM^U$ is bounded on $L^p(\bbR^2)$.

(ii) If $1< p<p_{\mathrm{cr}}(U)$ then $\cM^U$ is not bounded on $L^p(\bbR^2)$.

(iii)  For every $\eps>0$ we have 
$$c_{p}\sup_{\delta>0} \cK_p(U,\delta)
\le \|\cM^U\|_{L^p\to L^p}\le C_{\eps,p} \sup_{\delta>0} \delta^{-\eps}\cK_p(U,\delta)\,.
$$
Here $c_p, C_{p,\varepsilon}$ are constants only depending on $p$ or $p,\varepsilon$, respectively.
\end{thm}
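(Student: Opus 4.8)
The plan is to reduce the full maximal operator to a single-scale analysis and then sum the scales using a square-function argument, which will automatically produce the powers of $N(U^r,\delta)$ that enter $\cK_p$. By the homogeneity $\gamma_b(st)=s^b\gamma_b(t)$, the curve $(t,u\gamma_b(t))$ is invariant under the anisotropic dilation $(x_1,x_2)\mapsto(\lambda x_1,\lambda^b x_2)$ combined with $u\mapsto u$; more precisely, a parabolic rescaling by $\lambda$ converts $M^{(u)}$ into $M^{(\lambda^{b-1}u)}$ applied to a rescaled function. Thus $\cM^U$ is, up to these rescalings, controlled by the supremum over the dyadic pieces of $U$. First I would freeze a dyadic block: for each $n\in\bbZ$ set $U_n=U\cap[2^n,2^{n+1})$ and use the rescaling to replace $U_n$ by $2^{-n}U_n\subset[1,2]$, which is exactly the set $U^{2^n}$. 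The key point is that on $[1,2]$ the curves $(t,u\gamma_b(t))$ for $u,u'$ with $|u-u'|<\delta$ are uniformly within distance $\lesssim\delta$ of each other on the relevant interval $t\in[1,2]$ (in the averaging part), so a single averaging operator $M^{(u_j)}$ over a $\delta$-net $\{u_j\}$ of $U^{2^n}$, combined with a maximal function at scale $\delta$ in the second variable, dominates $\sup_{u\in U^{2^n}}M^{(u)}$.

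The heart of the matter is the single-scale estimate: for a set $V\subset[1,2]$ with $\delta$-covering number $N$, one should prove
\[
\Big\|\sup_{u\in V}\Big|\tfrac1\delta\int_0^\delta f(x-(t,u\gamma_b(t)))\,dt\Big|\Big\|_{p}
\lesssim \delta^{1-1/p}N^{1/p}\,\|f\|_p,
\]
or rather its dyadic-frequency-localized analogue, and then handle the remaining range $t\in[\delta,1]$ separately. For the piece $t\in[\delta,1]$ one expects no sparseness gain is needed beyond the Marletta--Ricci $p>2$ bound interpolated with a trivial $L^1$-type estimate, since curvature is genuinely present; the delicate region is $t$ small, where the curve is nearly the flat segment $(t,0)$ and degeneration to $U=[1,2]$ occurs. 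Here I would use a Fourier-analytic decomposition: write $f=\sum_k f_k$ with $\widehat{f_k}$ supported in $\{|\xi_2|\sim 2^k\}$ (the direction conjugate to the curvature), and for each $k$ prove an $L^2$ bound with gain $2^{-\epsilon k}$ coming from the non-flatness $\gamma_b(\pm1)\neq0$ (stationary phase / van der Corput), together with the trivial bound $\sup_{u\in V}|M^{(u)}f|\le \sum_{u_j}M^{(u_j)}f + (\text{error})$ giving the factor $N$ in $L^p$ for $p$ close to $1$. Interpolating the $2^{-\epsilon k}N^0$ $L^2$-bound against the $N^{1}$ $L^1$-type bound, and then summing in $k$, produces the $\delta^{-\epsilon}$ loss in the statement of part (iii) upper bound; this loss is exactly what one cannot avoid by this method and explains the $\epsilon$ in the theorem.

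Assembling the pieces: square-summing the dyadic blocks $U_n$ in $n$ is legitimate for the maximal function because $\sup_n(\cdot)\le(\sum_n(\cdot)^p)^{1/p}$, and after undoing the rescaling this contributes $\sup_r N(U^r,\delta)^{1/p}$; combined with the single-block bound $\delta^{1-1/p}$ this yields $\cK_p(U,\delta)$ up to the $\delta^{-\epsilon}$. Summing a geometric-type series in the scale parameter $\delta=2^{-\ell}$ requires only that $\delta^{-\epsilon}\cK_p(U,\delta)$ be summable after a further $\epsilon$-loss, which is exactly the hypothesis $\sup_{\delta}\delta^{-\epsilon}\cK_p(U,\delta)<\infty$ being finite; this is the content of part (i), since $p>p_{\mathrm{cr}}(U)$ guarantees (as noted in the excerpt) an honest power gain $\delta^{-\epsilon_0}\cK_p(U,\delta)$ bounded for some $\epsilon_0>0$, against which the $\delta^{-\epsilon}$ loss with small $\epsilon<\epsilon_0$ can be absorbed. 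Finally, part (ii) and the lower bound in (iii) are the converse direction: I would test $\cM^U$ on a function adapted to a rescaled $\delta$-neighborhood of the curve family — essentially $f=\mathbf 1_E$ with $E$ a union of $\delta\times\delta^b$ tiles along the curves $(t,u\gamma_b(t))$, $u$ ranging over a maximal $\delta$-separated subset of $U^r$ of cardinality $\sim N(U^r,\delta)$ — and compute that $\cM^U f\gtrsim 1$ on a set of measure $\gtrsim\delta\cdot N(U^r,\delta)\cdot\delta^{b-1}$ (a Besicovitch-type overlap computation), while $\|f\|_p^p\sim\delta^{b}N(U^r,\delta)$; the ratio gives precisely $\cK_p(U,\delta)^p$, and choosing $\delta$ to violate summability when $p<p_{\mathrm{cr}}(U)$ proves unboundedness. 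The main obstacle throughout is the sharp single-scale $L^p$ bound for $p$ near $1$ with the correct power $N^{1/p}$ and $\epsilon$-free-or-nearly-so dependence on $\delta$; this is where the interaction between the curvature-driven decay in $L^2$ and the crude union bound in $L^1$ must be balanced via interpolation, and where the non-flatness hypothesis $\gamma_b(\pm1)\neq0$ is essential.
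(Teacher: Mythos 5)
Your upper-bound scheme has a gap at its core. You propose to prove the single-scale, frequency-localized estimate by interpolating an $L^2$ bound of the form $N^0\,2^{-\epsilon k}$ for $\sup_{u\in V}$ against a union bound of size $N$ near $L^1$. The $N$-independent decaying $L^2$ bound for the maximal operator over a $\delta$-net is false when $N$ is large: take $V$ a $\delta$-net of $[1,2]$ with $\delta=2^{-k}$ and test on the indicator of a $\delta$-ball; the sup over $V$ is $\gtrsim\delta$ on a set of measure $\sim 1$, so the frequency-$2^k$ piece has $L^2$ norm bounded below, uniformly in $k$. Moreover, if your two claimed ingredients were both true, interpolation would give the piecewise bound $N^{2/p-1}2^{-ck(1-1/p)}$, which for $U=[1,2]$ sums over $k$ when $p$ is close to $2$ and would make $\mathcal{M}^{[1,2]}$ bounded on such $L^p$, contradicting the known failure (and the lower bound $\mathcal{K}_p([1,2]\cup\dots,\delta)=\delta^{1-2/p}\to\infty$). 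The correct $L^2$ input must carry $N^{1/2}$ (a square function over the net plus a derivative term, as in \eqref{L2endpt}); with that replacement your interpolation exponent does come out to $N^{1/p}\delta^{1-1/p}$, but then a second, unresolved issue appears: the union bound near $L^1$ is over \emph{all} dyadic blocks $n\in\bbZ$ and all dilation scales $j\in\bbZ$ simultaneously, an infinite family, and your remark that one may pass from $\sup_n$ to an $\ell^p$-sum over blocks produces a divergent sum unless one exploits the frequency localization in $(j,n)$ (the role of \eqref{LPrepr} and the Littlewood--Paley bounds \eqref{standardLP}--\eqref{dualLP}) and, near $p=1$, the positivity of $\tau_j^u$ together with the strong maximal function and the Nagel--Stein--Wainger/Christ bootstrap (Proposition \ref{positiveopprop} and the constant $B_p(R)$). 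This weighted vector-valued bootstrap is precisely what yields the $N^{1/p}$ (rather than $N$) dependence across all scales; your sketch contains no substitute for it. A smaller but real misconception: the sparseness of $U$ is not only needed for small $t$ --- the paper's counterexample uses $t\approx x_1\in[1,2]$, so even the unit-$t$ piece fails on $L^p$, $p<2$, for $U=[1,2]$; the parameter $\delta$ in $\mathcal{K}_p$ is a (parabolically rescaled) frequency scale, not a $t$-scale, and enters at every dyadic $t$-scale.

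The lower bound is also not correct as written. With your choice $f=\mathbf 1_E$, $E$ a union of tiles along the $N$ curves, you assert $\mathcal{M}^U f\gtrsim 1$ on a set of measure $\gtrsim \delta^b N$ while $\|f\|_p^p\sim\delta^b N$; the ratio is then $\sim 1$ and yields no growth of the operator norm, not $\mathcal{K}_p(U,\delta)^p$. The mechanism that produces $\mathcal{K}_p$ is focusing: take $f_\delta$ the indicator of a \emph{single} ball of radius $\delta$ (so $\|f_\delta\|_p^p\sim\delta^2$); each curve with parameter $u$ in a $\delta$-separated subset of $U^r$ passes through the ball and gives $M^{(u)}f_\delta\gtrsim\delta$ on a $\delta$-neighborhood of that curve over $x_1\in[1,2]$, and these $N(U^r,\delta)$ neighborhoods are pairwise disjoint, of total measure $\gtrsim\delta N(U^r,\delta)$. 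This gives $\|\mathcal{M}^U\|_{L^p\to L^p}\gtrsim\delta^{1-1/p}N(U^r,\delta)^{1/p}=\mathcal{K}_p(U,\delta)$ after the rescaling in the second variable, from which (ii) follows by letting $\delta\to0$ along the sequence witnessing $p<p_{\mathrm{cr}}(U)$. Your construction, which spreads the mass of $f$ over the whole family of neighborhoods, cannot produce this blow-up.
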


\begin{thm}\label{psensitivethmH}

Let 
$1<p\le 2$ and $p_{\mathrm{cr}}(U)$ as in \eqref{pcr}.

(i) If $p_{\mathrm{cr}}(U)<p\le 2$ then $\cH^U$ is bounded on $L^p(\bbR^2)$ if and only if $\fN(U)<\infty$.

(ii) If $1< p<p_{\mathrm{cr}}(U)$ then $\cH^U$ is not bounded on $L^p(\bbR^2)$.

(iii)  For every $\eps>0$ we have 
$$
\|\cH^U\|_{L^p\to L^p}\le C_p 
\sqrt{\log (\fN(U))} +C_{\eps,p}\sup_{\delta>0} \delta^{-\eps}\cK_p(U,\delta).
$$ and 
$$
c_p \big(
\sqrt{\log (\fN(U))} +\sup_{\delta>0} \cK_p(U,\delta)\big)\le
\|\cH^U\|_{L^p\to L^p}.
$$
Here $c_p, C_p, C_{p,\varepsilon}$ are constants only depending on $p$ or $p,\varepsilon$, respectively.
\end{thm}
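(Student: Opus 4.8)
The plan is to deduce parts (i) and (ii) from the two-sided bound (iii), and then to prove the two inequalities in (iii). For (i): if $p_{\mathrm{cr}}(U)<p\le 2$ then, by the remark preceding the theorem, there is $\eps=\eps(p,U)>0$ with $\sup_{0<\delta<1}\delta^{-\eps}\cK_p(U,\delta)<\infty$, so the upper bound in (iii) is finite exactly when $\fN(U)<\infty$, while the lower bound shows this finiteness is also necessary. For (ii): if $1<p<p_{\mathrm{cr}}(U)$ then there are $\eps'>0$ and a sequence $\delta_n\to 0$ with $\cK_p(U,\delta_n)\gtrsim\delta_n^{-\eps'}$, so $\sup_\delta\cK_p(U,\delta)=\infty$ and the lower bound in (iii) forces $\|\cH^U\|_{L^p\to L^p}=\infty$. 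As for the lower bound in (iii), it suffices (since $a+b\le 2\max(a,b)$) to prove $\|\cH^U\|_{L^p\to L^p}\gtrsim\sqrt{\log\fN(U)}$ and $\|\cH^U\|_{L^p\to L^p}\gtrsim\cK_p(U,\delta)$ for every $\delta$. The first holds for all $p>1$ and is already available (cf.\ the discussion after the definition of $\fN$, and \cite{grsy-first}). The second is obtained by testing $\cH^U$ on anisotropic dilates of indicator functions of thin rectangles adapted to a maximal $\delta$-separated subset of $U^r$ for a suitable $r$: each of the $\sim N(U^r,\delta)$ chosen parameters, by a Knapp-type computation using $\gamma_b(\pm1)\neq 0$, contributes something $\gtrsim 1$ on a rectangle, these output rectangles are essentially disjoint, and comparing $L^p$ norms while optimizing the side lengths produces the factor $\delta^{1-1/p}N(U^r,\delta)^{1/p}$; this is the same scheme as in Theorem \ref{pmaxthm}(iii) and in \cite{SWW95}, \cite{STW03}.

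For the upper bound, decompose $H^{(u)}=\sum_{j\in\Z}H_j^{(u)}$ into pieces supported where $|t|\sim 2^j$; a scaling identity shows that the multiplier of $H_j^{(u)}$ is $\mu(2^j\xi_1,2^{jb}u\xi_2)$ with $\mu(\eta)=\mathrm{p.v.}\!\int\psi(|s|)e^{-i(s\eta_1+\gamma_b(s)\eta_2)}\,\tfrac{ds}{s}$. Write $\mu=\mu_{\mathrm{lo}}+\mu_{\mathrm{hi}}$, where $\mu_{\mathrm{lo}}$ collects the region in which one of the two arguments is $O(1)$ and $\mu_{\mathrm{hi}}$ the region in which both are large; on the latter the genuine curvature of $\gamma_b$ (non-degenerate since $b>1$ and $\gamma_b(\pm1)\neq 0$) yields, via van der Corput, decay in $|\eta_1|+|\eta_2|$ together with localization to the cone where the two arguments are comparable. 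The corresponding low part $\sum_j H_j^{(u),\mathrm{lo}}$ is, uniformly in $u$, pointwise dominated by a constant multiple of $M^{(u)}f$ plus bounded error operators (one-dimensional Hilbert transforms and averages), so its supremum over $u$ is controlled by $\cM^U f$, and by Theorem \ref{pmaxthm}(iii) it contributes at most $C_{\eps,p}\sup_\delta\delta^{-\eps}\cK_p(U,\delta)\,\|f\|_p$.

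It remains to bound the oscillatory contribution $\sup_{u\in U}\bigl|\sum_j H_j^{(u),\mathrm{hi}} f\bigr|$ in $L^p$ by $C_p\sqrt{\log\fN(U)}\,\|f\|_p$. I would linearize the supremum by a measurable $u(\cdot)$ and group $U$ into its dyadic blocks $V_n=U\cap[2^n,2^{n+1})$, with $n$ ranging over a set of cardinality $\le\fN(U)$. Within each block, a Sobolev embedding in the parameter $u$ replaces $\sup_{u\in V_n}|\cdot|$ by $|G_n f|$ plus a square function in $\partial_u$, where $G_n$ is the operator at the base point of the block and the $\partial_u$-terms are of the same type but smoother; the $\partial_u$-terms are again absorbed into $\cM^U f$ (hence into the $\cK_p$ bound), while each $G_n$ inherits from $\mu_{\mathrm{hi}}$ a Littlewood--Paley-type frequency localization whose annuli are essentially separated for distinct $n$ (for $u\sim 2^n$ the cone lies where $|\xi_2|/|\xi_1|\sim 2^{-n}2^{-j(b-1)}$). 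One is thereby reduced to $\bigl\|\sup_n|G_n f|\bigr\|_p\lesssim\sqrt{\log\fN(U)}\,\|f\|_p$ for a maximal operator formed from $\le\fN(U)$ operators with essentially disjoint frequency supports, which is the classical $\sqrt{\log N}$ estimate, proved by combining the uniform $L^2$ bound for the square function $\bigl(\sum_n|G_n f|^2\bigr)^{1/2}$ with a Chang--Wilson--Wolff / exponential-square inequality, as in \cite{grsy-first}. The main obstacle is precisely this disentanglement: arranging the decomposition of $H^{(u)}$ so that the global dyadic spread of $U$ is carried entirely by the $L^2$-orthogonal, frequency-localized pieces $G_n$ (producing the $\sqrt{\log\fN(U)}$ term) while the clustering of $U$ inside dyadic blocks is carried entirely by $\cM^U$ (producing the $\cK_p$ term), and verifying that every cross-term and every $u$-derivative operator generated in the process is genuinely negligible; once this is in place, the surviving logarithmic-summation estimate is of a known type.
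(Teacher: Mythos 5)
Your reduction of (i) and (ii) to (iii), and both lower bounds (the Karagulyan-type bound quoted from \cite{grsy-first} and the Knapp-type computation, which is what the paper does in \S\ref{Mink-lower-bounds}), are in line with the paper. The upper bound, however, contains two genuine gaps, and in fact you have the roles of the two pieces reversed. First, the non-oscillatory ("low") part is \emph{not} pointwise dominated by $C\,M^{(u)}f$ plus $u$-independent bounded error operators: the error operators are truncated singular integrals whose truncation scales depend on $u$, and the supremum in $u$ of these is a maximal singular integral of directional type, not a positive operator. Concretely, take $U=\{2^n:1\le n\le N\}$: then $\cK_p(U,\delta)\approx\delta^{1-1/p}$, so $\|\cM^U\|_{L^p\to L^p}\le C_p$ uniformly in $N$, and the cone/oscillatory contribution is also $O_p(1)$ (sum \eqref{Tjl} over $\ell$); since $\|\cH^U\|_{L^p\to L^p}\gtrsim\sqrt{\log N}$ for every $p>1$, the non-oscillatory part alone has operator norm $\gtrsim\sqrt{\log N}$, so your claimed domination (which would give $O_p(1)$) is false. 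In the paper it is exactly this part, $S^{(u)}$, that produces the $\sqrt{\log\fN(U)}$ term, via the Chang--Wilson--Wolff argument of Lemma \ref{ell=zero-lem}(ii) taken from \cite{grsy-first}.

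Second, and symmetrically, the cone part cannot be bounded by $C_p\sqrt{\log\fN(U)}$: for $U=[1,2]$ one has $\fN(U)=O(1)$ and the non-oscillatory part is bounded, yet $\cH^{[1,2]}$ is unbounded on $L^p$ for $p\le 2$ (\cite{STW03}), so the cone part must carry the $\cK_p$ term, i.e. the local covering numbers. Your mechanism does not produce this: differentiating the piece localized at rescaled frequency $\sim 2^\ell$ in the parameter costs a factor $\sim 2^\ell$, so the $\partial_u$-terms are larger, not smoother, and a Sobolev/FTC step over a whole dyadic block of $U$ loses $2^{\ell}$, swamping the van der Corput gain $2^{-\ell/2}$; they cannot simply be "absorbed into $\cM^U$". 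The fix, which is the real content of the paper, is to cover each rescaled block $U_n$ by intervals of length $2^{-\ell}$ --- precisely where $N(U^r,2^{-\ell})$, hence $\cK_p(U,2^{-\ell})$, enters --- apply the fundamental theorem of calculus only over those short intervals with the normalized derivative $2^{-\ell}\tfrac{d}{ds}$, pass via the dual Littlewood--Paley inequality \eqref{dualLP} to a square function over all $(j,n,i)$ so as to handle the cancellative sum in $j$, and then prove that square-function bound by the Nagel--Stein--Wainger/Christ bootstrap for the positive operator (Proposition \ref{positiveopprop}) for $p$ near $1$, interpolated with the $L^2$/van der Corput estimate. None of this machinery appears in your outline, and Theorem \ref{pmaxthm}(iii) cannot substitute for it, since it controls positive averages rather than the sum over $j$ of the oscillatory pieces.
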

We note that part (i), (ii) of the theorems follow immediately from part (iii) of the respective theorem. 

We discuss some examples. We have  $p_{\mathrm{cr}}(U)=1$ for lacunary $U$ and  we have  $p_{\mathrm{cr}}(U)=2$ if $U$ contains any intervals. There are many interesting intermediate examples with 
$1<p_{\mathrm{cr}}(U)<2$, 
see  \cite{SWW95}. One may take for $U$ a self similar Cantor set $\cC_\beta$ of Minkowski dimension $\beta$, contained in $[1,2]$; then $p_{\mathrm{cr}}(\cC_\beta)= 1+ \beta$. 
This remains true if for 
$U$ we take $\cup_{k\in \bbZ}  2^k \cC_\beta$ in Theorem \ref{pmaxthm}, or, with finite $F\subset \bbZ$,
we take 
$U=\cup_{k\in F}  2^k \cC_\beta$ in Theorem \ref{psensitivethmH}.

Another set of examples comes from considering convex sequences. 
One may take $S_a
=\{1+n^{-a}: n\in \bbN\}$ then $p_{\mathrm cr}(S_a)=\frac{2+a}{1+a}$. Again we may also take suitable unions of dilates of $S_a$, i.e. for $U$ 
we can take $\cup_{k\in \bbZ}  2^k S_a$ in Theorem \ref{pmaxthm}, or, 
$U=\cup_{k\in F}  2^k S_a$ in Theorem \ref{psensitivethmH},  provided that $F\subset \bbZ$ is  finite.

We shall in fact prove  sharper but more technical versions of Theorems \ref{pmaxthm} and  \ref{psensitivethmH}.
The term $C_{\eps, p}  \delta^{-\eps}\cK_p(U,\delta)$ can be replaced with one with  logarithmic dependence, namely 
$$C_p [\log(2/\delta)]^{A}\cK_p(U,\delta)$$  for  $A>14/p-6$.
More precisely,  we have the following 

\begin{thm}\label{logsthm}
Let 
$1<p\le 2$. Then there is $C$ independent of $p$ and $U$ so that
\Be\label{MUlogest}
\|\cM^U\|_{L^p\to L^p} \le C\sum_{\ell\ge 1} \vartheta_{p,\ell} \cK_p(U, 2^{-\ell}),
\Ee
where $\vartheta_{p,\ell}=
(p-1)^{3-\frac {10}p}$ if $\ell\le (p-1)^{-1}$ and 
$\vartheta_{p,\ell}=
 \ell^{7(\frac {2}p -1)}$ if $\ell> (p-1)^{-1}$. 
 
 Moreover,
\Be\label{HUlogest}
\|\cH^U\|_{L^p\to L^p} \le
C (p-1)^{-7} \sqrt{\log (\fN(U))} +
C (p-1)^{-2} \sum_{\ell\ge 1} \vartheta_{p,\ell} \cK_p(U, 2^{-\ell}).
\Ee
\end{thm}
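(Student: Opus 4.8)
The plan is to deduce Theorem~\ref{logsthm} from a quantitatively sharpened form of the proofs of Theorems~\ref{pmaxthm} and~\ref{psensitivethmH}, carrying explicit track of how every constant depends on $p-1$ and on the relevant dyadic scale. The improvement of the crude factor $\delta^{-\eps}$ to a power of $\log(2/\delta)$, and the exact threshold $A>14/p-6$, will then be a formal consequence of the convergence $\sum_{\ell\ge1}\vartheta_{p,\ell}\,\ell^{-A}<\infty$, which holds if and only if $A>7(2/p-1)+1=14/p-6$.

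\emph{Step 1: reduction to building blocks.} Write $\cM^U$ (resp.\ $\cH^U$) as a coarse part, in which $U$ is replaced within each dyadic block by a fixed $2^{-1}$--net, plus a sum over $\ell\ge1$ of refinements $A_\ell$, where $A_\ell$ isolates the effect of resolving $U$ at scale $2^{-\ell}$ rather than $2^{-\ell+1}$; for $\cM^U$, $A_\ell$ still carries a supremum over the dilation parameter, dealt with by a square function in the dyadic dilations. By a Littlewood--Paley decomposition adapted to the anisotropic dilations $\dil_\lambda(x_1,x_2)=(\lambda x_1,\lambda^b x_2)$, the block $A_\ell$ is frequency--localized to $|\xi_1|+|\xi_2|^{1/b}\sim2^\ell$, since a difference of curves with $|u-u'|\sim2^{-\ell}$ has its multiplier supported there. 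The coarse part of $\cM^U$ is pointwise dominated by a Hardy--Littlewood maximal function, hence by $O(\cK_p(U,1/2))$; the coarse part of $\cH^U$ is the genuinely multi--scale object treated in \cite{grsy-first}, contributing the term $\sqrt{\log\fN(U)}$, with constant $C(p-1)^{-7}$ once that argument is rerun with $p\le 2$ weights. For $A_\ell$ itself, the essential structural observation is that every frequency $\xi$ is ``resonant'' for only one dyadic block of $U$, and within that block for only $\le\sup_{r>0}N(U^r,2^{-\ell})$ of the $2^{-\ell}$--separated values of $u$; this is exactly why $\cK_p$ carries the supremum over $r>0$, and it is what makes the multi--scale structure of $U$ otherwise invisible to $A_\ell$.

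\emph{Step 2: the core estimate and its interpolation.} The heart of the matter is $\|A_\ell\|_{L^p\to L^p}\lesssim\vartheta_{p,\ell}\,\cK_p(U,2^{-\ell})$ for $1<p\le2$, $\ell\ge1$, which I obtain by real interpolation of two endpoint estimates. At $L^2$: cover $U$ by $\sim N(U,2^{-\ell})$ intervals $I$ of length $2^{-\ell}$; on the frequency plate of $A_\ell$ the curve with $u\in I$ agrees with the one for the center $u_I$ up to an acceptable error, the contributions attached to distinct $I$ have essentially disjoint frequency supports, and the stationary/non--stationary phase decay of the relevant multipliers---where the curvature $b\neq1$ enters---yields $\|A_\ell\|_{L^2\to L^2}\lesssim 2^{-\ell/2}\sup_{r>0}N(U^r,2^{-\ell})^{1/2}=\cK_2(U,2^{-\ell})$ \emph{with no loss in $\ell$}. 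At the lower endpoint one invokes the $L^{p_0}$ boundedness of the single--curve maximal operator resp.\ Hilbert transform (constant $(p_0-1)^{-O(1)}$), together with the crude bound $\sup_u\le\#\{I\}$ for the localized family and an $\ell^{O(1)}$ loss for restricting a single--curve operator to one anisotropic annulus. Since $\log\cK_q(U,\delta)$ is affine in $1/q$, interpolation with $\theta=2/p-1$ returns $\cK_p(U,2^{-\ell})$ times a power of $\ell$; one chooses the exponent $p_0$, roughly $p_0-1\sim\min(p-1,\ell^{-1})$, to balance the two losses, and it is exactly whether this forces $p_0$ to stay bounded away from $1$ (when $\ell\le(p-1)^{-1}$) or not (when $\ell>(p-1)^{-1}$) that produces the two regimes of $\vartheta_{p,\ell}$, the exponents $3-10/p$ and $7(2/p-1)$ emerging from the bookkeeping. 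Summing over $\ell$ completes \eqref{MUlogest}: finiteness of the right side forces $p>p_{\mathrm{cr}}(U)$, hence $\cK_p(U,2^{-\ell})\lesssim2^{-\eps\ell}$ for some $\eps>0$, which dominates the polynomial growth of $\vartheta_{p,\ell}$. For \eqref{HUlogest} the $A_\ell$--contributions are estimated in the same way, with an extra $(p-1)^{-2}$ loss from the singular--integral square function, and then added to the coarse term $C(p-1)^{-7}\sqrt{\log\fN(U)}$ from Step~1.

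\emph{Main obstacle.} I expect the decisive difficulty to be the \emph{loss--free} $L^2$ estimate for $A_\ell$, which is what forces $\vartheta_{p,\ell}=1$ at $p=2$: one must simultaneously control the supremum over $u$ (ranging, after freezing within $2^{-\ell}$--intervals, over $\sim N(U^r,2^{-\ell})$ well--separated plates), the supremum over dyadic dilations (for $\cM^U$), and the $O(\ell)$ anisotropic annuli---without recourse to the lossy Sobolev embedding $\sup\lesssim\|\cdot\|^{1/2}\|\partial_u\cdot\|^{1/2}$, any $\ell^{\eps}$ loss here being fatal to the $p=2$ endpoint. This calls for genuine orthogonality among the plates combined with a sharp square--function estimate for oscillatory averages over the curve. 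A secondary, essentially bookkeeping difficulty is to carry the sharp powers of $(p-1)^{-1}$ through the interpolation in Step~2 and through the coarse--part argument imported from \cite{grsy-first}, so as to justify the exponents $3-10/p$, $7(2/p-1)$ and $-7$ in the statement.
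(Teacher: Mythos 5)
Your overall architecture (split off a coarse/low-frequency part treated by \cite{grsy-first} and the strong maximal function, discretize $U$ at scale $2^{-\ell}$ inside each dyadic block, control the supremum over a $2^{-\ell}$-interval by an endpoint value plus the integral of the $u$-derivative, prove an $L^2$ bound and a near-$L^1$ bound and interpolate in $p$ with $p_0-1\sim\min(p-1,\ell^{-1})$) matches the paper's Sections \ref{sec:decomp} and \ref{sec:mainthmpf} quite closely. But there is a genuine gap at what you treat as the routine endpoint. Your near-$L^1$ input is ``single-curve operator bounds $+$ crude counting $\sup_u\le\#\{I\}$ $+$ an $\ell^{O(1)}$ loss.'' The supremum you must control is not only over the $\cN_{n,\ell}(U)\le\sup_r N(U^r,2^{-\ell})$ values of $u$ in one block: it runs simultaneously over \emph{all} dilation scales $j\in\bbZ$ and \emph{all} dyadic blocks $n\in\bbZ$. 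The crude $\ell^{p_0}$-counting diverges over these infinite index sets, and the only way to exploit the frequency disjointness in $(j,n)$ (the identity \eqref{LPrepr}) near $L^1$ is through vector-valued Littlewood--Paley, after which one needs an $\ell^\infty$-valued endpoint which is precisely the full weighted maximal operator over the family again --- a self-referential obstruction. The paper resolves exactly this by the Nagel--Stein--Wainger/Christ bootstrap of Section \ref{sec:bootstrap}: it introduces the weighted maximal constant $B_p(R)$ in \eqref{BpR}, interpolates in Lemma \ref{positive-prel-lemma} between the $q=p$ endpoint (van der Corput, \eqref{eqn:vdC}) and the $q=\infty$ endpoint (positivity plus the pointwise bound \eqref{eqn:pointwiseA}, which reintroduces $B_p(R)$), and closes the loop via $B_p(R)\lesssim(p-1)^{-2}+(p-1)^{p-5}B_p(R)^{1-p/2}$, yielding $(p-1)^{2-10/p}$ and then the square-function bound of Proposition \ref{positiveopprop} with the factor $(p-1)^{3-10/p}2^{-\ell(p-1)/2}$. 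Nothing in your sketch plays the role of this self-improvement step, and without it the exponents $3-10/p$ and $7(2/p-1)$ cannot be ``bookkept'' because the near-$L^1$ estimate itself is not established.

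Two further points. First, you identify the loss-free $L^2$ bound as the decisive difficulty; in fact it is the easy part: Plancherel plus van der Corput give $2^{-\ell/2}$ per piece, the localizations \eqref{LPrepr} make distinct $(j,n)$ essentially orthogonal, and the sum over the $\le\cN_{n,\ell}(U)$ values of $u$ in a block is handled by trivial counting, giving \eqref{L2endpt} directly; no delicate orthogonality between ``plates'' beyond this is needed. Second, your structural claim that the refinement block $A_\ell$ is frequency-supported in $|\xi_1|+|\xi_2|^{1/b}\sim 2^\ell$ because a difference of curves with $|u-u'|\sim 2^{-\ell}$ ``has its multiplier supported there'' is not correct as stated: such differences are merely small, not vanishing, at low frequencies, and they decay only like $|\xi|^{-1/2}$ at high frequencies. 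The paper instead decomposes in frequency first (the annuli $\zeta_\ell$ in \eqref{Ajludef}--\eqref{Tjludef}) and only then discretizes $u$ at the matching scale $2^{-\ell}$; your decomposition could be repaired along these lines, but the missing bootstrap for the near-$L^1$ endpoint is the essential gap.
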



{\emph{Structure of the paper.}} In \S \ref{sec:decomp} we decompose the operators $\mathcal{M}^U$, $\mathcal{H}^U$ in the spirit of \cite{grsy-first} in order to prepare for the proof of Theorem \ref{logsthm}. The proof of Theorem \ref{logsthm} is then completed in \S \ref{sec:bootstrap} and \S \ref{sec:mainthmpf}. Finally, the lower bounds claimed in Theorem \ref{pmaxthm} and Theorem \ref{psensitivethmH} are addressed in \S \ref{Mink-lower-bounds}.\\

\section{Basic reductions}\label{sec:decomp}
We recall some notation and basic reductions from \cite{grsy-first}.
By the assumption of homogeneity and $\gamma_b(\pm 1)\neq 0$ there are $c_\pm\neq 0$ such that $\gamma_b(t)=
c_+ t^b$ for   $t>0$, and
 $\gamma_b(t)=
c_- (-t)^b$ for   $t<0$, and finally $\gamma_b(0)=0$.
We note that by scaling we may always assume that $c_-=1$. Let $\chi_+\in C^\infty_c$ be supported in $(1/2,2)$ such that $$\sum_{j\in \bbZ} \chi_+(2^j t) =1 \text{ for $t>0$.} $$
Let $\chi\ci-(t)=\chi_+(-t)$ and $\chi=\chi\ci+ + \chi\ci-$.
We define measures $\tau_0$, $\sigma_0$, $\sigma_\pm$ by
\begin{align*}\label{sigmapmdef}
\inn{\tau_0} {f} &= \int f(t,\ga_b(t)) \chi\ci{+}(t) dt,
\\
\inn{\sigma\ci\pm} {f} &= \int f(t,\ga_b(t)) \chi\ci{\pm}(t) \frac {dt} {t},
\\
\sigma_0 &= \sigma_+ + \sigma_-.
\end{align*}
Let, for $j\in \bbZ$,   the measures $\tau_j^u, \sigma_j^u$ be defined by 
\begin{align*}\inn{\tau_j^u  } {f} &= \int f(t,u\ga_b(t)) 2^j \chi_+(2^j t) dt,\\
\inn{\sigma_j^u  } {f} &= \int f(t,u\ga_b(t)) \chi(2^j t) \frac {dt} {t}.
\end{align*}
By homogeneity of $\gamma_b$ we have
$\tau_j^u = 2^{j(1+b)} \tau_0^u (\delta_{2^j}^b \cdot)$ with  $\delta_{t}^bx= (tx_1, t^bx_2)$, as well as the analogous relation between $\sigma_j^u $ and $\sigma_0^u$. We note that the $\tau_j^u$ are positive measures and the $\sigma_j^u$ have cancellation.

For Schwartz functions $f$ the Hilbert transform along $\Gamma_b^u$ can be written as
$$H ^{(u)}f= \sum_{j\in \bbZ}{\sigma_j^u}* f.$$
For the maximal function it is easy to see that there is the pointwise estimate
\Be\label{ptwmaxest}
M^{(u)}f(x)\le C \sup_{j\in \bbZ} \tau_j^u*|f|.
\Ee

Following \cite[\S 2]{grsy-first} we  further decompose $\sigma_0$ and $\tau_0$.
Choose Schwartz function $\eta_0$,  supported in $\{|\xi|\le 100\}$  and equal with $\eta_0(\xi)=1$ for $|\xi|\le 50$.
Let $\varsigma_+\in C^\infty_c(\bbR)$ be supported in 
$(b (1/4)^{b-1} , b 4^{b-1} )$ and  equal to $1$ on 
$[b (2/7)^{b-1} , b (7/2)^{b-1} ]$.
Let $\varsigma_-\in C^\infty_c(\bbR)$ be supported on
$(-b 4^{b-1} , -b (1/4)^{b-1} )$ and equal to $1$ on 
$[-b (7/2)^{b-1} , -b (2/7)^{b-1} ]$.

One then decomposes 
	\begin{align*} 
	\sigma_0&= \phi_0+\mu_{0,+} +\mu_{0,-}
	\\
	\tau_0&= \varphi_0+\rho_{0}
	\end{align*}
	where $\phi_0$, $\varphi_0$ are  given by
	\Be\begin{aligned}\nonumber
		\widehat{ \phi_0}(\xi) =
		\eta_0(\xi) \widehat \sigma_0(\xi) 
		&+ (1-\eta_0(\xi)) \big(1- \varsigma_- (\tfrac{\xi_1}{c_+\xi_2})\big)\widehat \sigma_+(\xi) 
		\\
		&+ (1-\eta_0(\xi)) \big(1- \varsigma_+(\tfrac{\xi_1}{c_-\xi_2})\big)\widehat \sigma_-(\xi) 
	\end{aligned}\Ee
	and 
	\Be \nonumber
		\widehat{ \varphi_0}(\xi) =
		\eta_0(\xi) \widehat \tau_0(\xi) 
		+ (1-\eta_0(\xi)) \big(1- \varsigma_- (\tfrac{\xi_1}{c_+\xi_2})\big)\widehat \tau(\xi) .
		\Ee

	The measures 
	and $\mu\ci{0,\pm}$ and $\rho_0$ are given via the Fourier transform by
	\begin{align*}
	\widehat \mu_{0,+} (\xi) &= (1-\eta_0(\xi))  \varsigma_- (\tfrac{\xi_1}{c_+\xi_2})\widehat \sigma_+(\xi) ,
	\\
	\widehat \mu_{0,-} (\xi) &= (1-\eta_0(\xi))  \varsigma_+(\tfrac{\xi_1}{c_-\xi_2})\widehat \sigma_-(\xi) 
	\end{align*}
	and
	\begin{equation}\label{rho0def}
	\widehat \rho_{0} (\xi) = (1-\eta_0(\xi))  \varsigma_- (\tfrac{\xi_1}{c_+\xi_2})\widehat \tau_0(\xi). 
	\end{equation}

As in Lemma 2.1 of \cite{grsy-first},
the functions $\varphi_0$, $\phi_0$ are Schwartz functions. In addition we have 
	$\widehat \phi_0(0)=0$. 

Define, for $j\in \bbZ$,  $\varphi_j$ and  $\phi_j$ 
by scaling via $\widehat \varphi_j(\xi)=\widehat \varphi_0 (2^{-j}\xi_1, 2^{-jb}\xi_2)\widehat f(\xi) $ and
$\widehat \phi_j(\xi)=\widehat \phi_0 (2^{-j}\xi_1, 2^{-jb}\xi_2)\widehat f(\xi)$.
Define $A_{j,0}^u f$  by
$$\widehat{A_{j,0}^u f} (\xi) = \widehat \varphi_j(\xi_1, u\xi_2) \widehat f(\xi)$$
and let $\cM_0 f(x) =\sup_{j\in \bbZ} \sup_{u\in \bbR} |A_{j,0}^u f(x) | $. Let 
$$\widehat{S^{(u)} f} (\xi) =\sum_{j\in \bbZ} 
\widehat \phi_j (\xi_1, u \xi_2)\widehat f(\xi).
$$
Let $M^{\mathrm{str}} f$ denote the strong maximal function of $f$. 
For $p\in (1,2]$ we have 
\Be\label{Mstrong} \|M^{\mathrm{str}}\|_{L^p\to L^p} \le C(p-1)^{-2}. \Ee
This follows from the pointwise bound $M^{\mathrm{str}}\le M^{(1)}\circ M^{(2)}$, where $M^{(k)}$ denotes the Hardy--Littlewood maximal operator taken in the $k$th variable. Indeed, $M^{(k)}$ is of weak type $(1,1)$ so Marcinkiewicz interpolation gives $\|M^{(k)}\|_{L^p\to L^p} \le C (p-1)^{-1}$ for some constant $C>0$ and all $p\in (1,2]$, which implies \eqref{Mstrong}.

\begin{lem} \label{ell=zero-lem}
There exists a constant $C$ such that for all $p \in (1,2]$,\\
(i) $$\|\cM_0 f\|_p \le C (p-1)^{-2} \|f\|_p.$$
(ii) $$\|\sup_{u\in U} |S^{(u)} f| \|_p \le C (p-1)^{-7}\sqrt{\log\fN(U)} \|f\|_p.
$$
\end{lem}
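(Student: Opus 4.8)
The plan is to treat the two parts separately, each via a square-function / Littlewood--Paley argument in the frequency variable, exploiting that $\widehat{\varphi_0}$, $\widehat{\phi_0}$ are supported away from the ``cone'' directions $\xi_1/\xi_2 \approx b c_\pm^{-1}$ where the curve is tangent to a line.

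\textbf{Part (i).} First I would observe that for fixed $j$, the operator $A_{j,0}^u$ is convolution with a Schwartz function $\varphi_j(\cdot_1, u\,\cdot_2)$ whose Fourier support avoids the tangential cone; because of this the associated maximal operator in $u$ behaves like a product-type object. The key step is to dominate $\sup_{u}|A_{j,0}^u f|$ pointwise by (a constant times) $M^{\mathrm{str}}$ applied to a Littlewood--Paley piece of $f$, uniformly in $j$. Concretely: since $\widehat{\varphi_0}$ is supported in a region where $|\xi_1| \sim |\xi_2|^{1/b}$ fails and instead $|\xi_2|\ls |\xi_1|^b$ (or the analogous reversed regime is excluded), after rescaling the $u$-parameter only moves the function along a direction transverse to its frequency support, and one gets the bound $\sup_u |A_{j,0}^u f| \ls M^{\mathrm{str}} (\Delta_j f)$ where $\Delta_j$ is a smooth frequency projection to the annulus $|\xi_1|\sim 2^j$. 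Then
\[
\|\cM_0 f\|_p^2 = \big\|\sup_j \sup_u |A_{j,0}^u f|\big\|_p^2 \ls \big\| \big(\sum_j |M^{\mathrm{str}}(\Delta_j f)|^2\big)^{1/2}\big\|_p^2.
\]
By the Fefferman--Stein vector-valued inequality for $M^{\mathrm{str}}$ (whose $L^p(\ell^2)$ norm is $\ls (p-1)^{-2}$ by the same tensoring argument as \eqref{Mstrong} combined with the scalar-valued Fefferman--Stein bound), this is $\ls (p-1)^{-4}\|(\sum_j |\Delta_j f|^2)^{1/2}\|_p^2 \ls (p-1)^{-4}\|f\|_p^2$ by standard Littlewood--Paley theory. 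This would give a $(p-1)^{-2}$ bound after taking square roots; one should double-check whether the vector-valued Fefferman--Stein constant is the bottleneck and whether a single power of $(p-1)^{-1}$ can be saved by using that $\cM_0$ is already bounded on $L^2$ trivially (Plancherel) and interpolating/bootstrapping rather than going through the full strong maximal bound.

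\textbf{Part (ii).} Here the supremum over $u$ is over a set $U$ with $\fN(U)$ dyadic blocks, and $S^{(u)}$ is the full sum $\sum_j$. The $\sqrt{\log \fN(U)}$ loss is the familiar cost of passing from a single $u$ to a supremum over many scales of $u$; the plan is: (a) reduce, by the scaling invariance $\tau_j^u$, $\sigma_j^u$ inherit under $(x,u)\mapsto(\delta_{2^k}x, 2^{k(b-1)}u)$ and the structure of $\fN(U)$, to the case where $U\subset[1,2]$ and then sum over the at most $\fN(U)$ dyadic blocks using the triangle inequality in a way that only costs $\sqrt{\log\fN(U)}$ rather than $\fN(U)$ — this is exactly the mechanism from \cite{grsy-first}, based on an $L^2$-orthogonality/almost-orthogonality estimate between the pieces $\phi_j$ at different $j$ combined with Rademacher-type or direct counting; (b) for $U\subset[1,2]$, linearize $\sup_{u\in U}|S^{(u)}f| \le \sum_j \sup_{u\in[1,2]}|\phi_j(\cdot_1,u\cdot_2) * f|$ and then, because $\widehat{\phi_0}$ avoids the tangential cone and $\widehat{\phi_0}(0)=0$, bound $\sup_{u\in[1,2]}|\phi_j(\cdot_1,u\cdot_2)*f|$ by $M^{\mathrm{str}}(\Delta_j f)$ as in part (i), but now crucially exploit the cancellation $\widehat{\phi_0}(0)=0$ to get a square-function estimate with a gain, so that $\|\sum_j \cdots\|_p$ does not lose a factor of the number of scales but only $\sqrt{\log\fN(U)}$. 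The factor $(p-1)^{-7}$ presumably arises from iterating the $(p-1)^{-2}$ strong-maximal bound together with an interpolation between an $L^2$ estimate (with the sharp $\sqrt{\log\fN(U)}$ and no $(p-1)$ loss) and a crude $L^p$ estimate; tracking the exponent carefully through that interpolation is where the $-7$ comes from.

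\textbf{Main obstacle.} The hard part will be part (ii): getting the $\sqrt{\log\fN(U)}$ dependence (rather than $\fN(U)$ or $\log\fN(U)$) for $p<2$ requires a genuinely $L^p$ square-function estimate for the family $\{\phi_j(\cdot_1,u\cdot_2)*\}_{j,u}$ that is uniform in $u\in[1,2]$ and exploits both the off-cone Fourier support (to reduce to a product/strong-maximal setting) and the cancellation $\widehat{\phi_0}(0)=0$ (to sum in $j$). At $L^2$ this is Plancherel plus almost-orthogonality, but for $1<p<2$ one must build a vector-valued / weighted square-function inequality and then interpolate with a safe endpoint, and doing this while keeping the $(p-1)$-dependence polynomial (not exponential) is the delicate bookkeeping. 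I expect the off-cone localization lemma — that $\sup_u$ of these specific convolutions is controlled by $M^{\mathrm{str}}$ of a frequency-localized piece, with constants independent of $j$ and $u$ — to be the technical heart of both parts, and it should follow the corresponding lemma in \cite{grsy-first}.
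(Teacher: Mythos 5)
For part (i), your underlying idea---dominate by the strong maximal function and invoke \eqref{Mstrong}---is exactly the paper's mechanism, but your implementation has two genuine problems. First, the frequency localization $\sup_u|A_{j,0}^u f|\ls M^{\mathrm{str}}(\Delta_j f)$ (with $\Delta_j$ an annulus projection $|\xi_1|\sim 2^j$) is not available: $\widehat{\varphi_0}$ is a Schwartz function containing the low-frequency piece $\eta_0\widehat{\tau_0}$ as well as the off-cone high frequencies, so $\widehat{\varphi_j}(\xi_1,u\xi_2)$ is \emph{not} supported where $|\xi_1|\sim 2^j$, and $A_{j,0}^u f\neq A_{j,0}^u\Delta_j f$. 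Second, even granting such a localization, your bookkeeping does not reach the stated constant: composing a vector-valued Fefferman--Stein bound of size $(p-1)^{-2}$ with a Littlewood--Paley inequality, which itself costs at least $(p-1)^{-1}$ as $p\to 1$, gives $(p-1)^{-3}$ or worse, not $(p-1)^{-2}$. The paper's proof avoids both issues: the kernel of $A_{j,0}^u$ is an $L^1$-normalized two-parameter anisotropic dilate of the fixed Schwartz function $\varphi_0$, so one has the uniform pointwise bound \eqref{ell=0Mstr} with $f$ itself on the right, hence $\cM_0 f\le C M^{\mathrm{str}}f$ pointwise, and \eqref{Mstrong} finishes with exactly $(p-1)^{-2}$; no Littlewood--Paley decomposition enters at all.

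For part (ii) the gap is more substantive. The paper does not reprove this estimate; it quotes Theorem 2.2 of \cite{grsy-first}, whose proof rests on the Chang--Wilson--Wolff martingale inequality \cite{CWW85}. It is the sub-Gaussian (exponential-square) content of that inequality which converts a supremum over $\fN(U)$ dyadic $u$-blocks, each controlled by a suitable square function, into a loss of only $\sqrt{\log\fN(U)}$ for all $1<p\le 2$; the exponent $7$ in $(p-1)^{-7}$ comes from tracking constants through that argument. Your sketch replaces this with ``$L^2$ almost-orthogonality plus Rademacher-type or direct counting'' and a per-block triangle inequality. Such counting gives $\fN(U)^{1/2}$, or, via the $\ell^q$-trick with $q\sim\log\fN(U)$, a $\sqrt{\log\fN(U)}$ loss only if one already has $L^q$ bounds growing like $\sqrt q$ --- which is precisely the Chang--Wilson--Wolff input you never supply. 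Moreover, pulling $\sup_u$ inside $\sum_j$ and hoping that $\widehat{\phi_0}(0)=0$ repairs the sum is not workable as stated, since the number of scales $j$ is infinite; what is needed is a square-function control of $\sum_j\phi_j(\cdot_1,u\,\cdot_2)*f$ uniform in $u$, which is exactly what \cite{grsy-first} establishes before applying the martingale inequality. So the key idea producing the $\sqrt{\log\fN(U)}$ bound for $p<2$ is missing from your proposal.
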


\begin {proof}
Part (i) follows from the estimate 
\Be \label{ell=0Mstr}
|A_{j,0}^u f(x)| \le C M^{\mathrm{str}} f(x).
\Ee 
Part (ii) is more substantial and relies on the Chang--Wilson--Wolff bounds for martingales, \cite{CWW85}. This is the subject of Theorem 2.2 in \cite{grsy-first}. The dependence on $p$ was not specified there, but can be obtained by a literal reading of the proof provided in \cite[\S 4]{grsy-first}. We remark that the exponent $7$ can likely be improved, but it is satisfactory for our purposes here.
\end{proof}

We also  decompose $\widehat {\rho_0}$ and $\widehat {\mu_{0,\pm}}$ further by making an isotropic 
decomposition for large frequencies. Let $\zeta_0\in C^\infty_c(\bbR^2)$ supported in 
$\{\xi:|\xi|<2\}$ and such that $\zeta_0(\xi)=1$ for $|\xi|\le 5/4$. For $\ell=1,2,3,\dots$ let 
\[\zeta_\ell(\xi)=\zeta_0(2^{-\ell}\xi)- \zeta_0(2^{1-\ell}\xi).\] Then for $\ell>0$, $\zeta_\ell$ is supported in the annulus $\{\xi:2^{\ell-1}<|\xi|<2^{\ell+1} \}$ and we have 
$1=\sum_{\ell>0}\zeta_\ell(\xi)$ for $\xi$ in the support of $\widehat{\rho_0}, \widehat{\mu_{0,\pm}}$.

Define operators $A_{j,\ell}^u $ and $T_{j,\ell,\pm}^u$ by
\begin{align}\label{Ajludef}
\widehat {A_{j,\ell}^u f}(\xi)&= \zeta_\ell (2^{-j}\xi_1, 2^{-jb} u\xi_2) \widehat{\rho_{0}}(2^{-j}\xi_1, 2^{-jb} u\xi_2)  \widehat{f}(\xi),\\\label{Tjludef}
\widehat {T_{j,\ell,\pm}^uf}(\xi)&= \zeta_\ell (2^{-j}\xi_1, 2^{-jb} u\xi_2)\widehat{\mu_{0,\pm}}(2^{-j}\xi_1, 2^{-jb} u\xi_2)  \widehat{f}(\xi).
\end{align}

We shall show 
\begin{prop} \label{mainthm}
There is $C>0$ such that for 
 each $\ell>0$, $p \in (1,2]$ we have
 \begin{equation}\label{Ajlmax}
\big\| \sup_{u\in U} \sup_{j\in \bbZ}| A_{j,\ell}^{u} f|\,\big\|_p  \le C  \vartheta_{p,\ell} \mathcal{K}_p(U,2^{-\ell}) \|f\|_p,
\end{equation} 
where $\vartheta_{p,\ell}=(p-1)^{3-\frac {10}p}\bbone_{\ell\le (p-1)^{-1}} + \ell^{7(\frac {2}p -1)}\bbone_{\ell> (p-1)^{-1}}$ and
\begin{equation}\label{Tjl}
\Big\| \sup_{u\in U} \Big|\sum_{j\in \bbZ}
T_{j,\ell,\pm}^{u} f\Big|\,\Big\|_p \le C (p-1)^{-2}  \vartheta_{p,\ell} \mathcal{K}_p(U,2^{-\ell}) \|f\|_p.
\end{equation}
\end{prop}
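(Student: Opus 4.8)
The plan is to prove Proposition \ref{mainthm} by reducing both estimates to a single-scale square-function/maximal estimate and then summing over scales using the annular localization index $\ell$. I will first treat the maximal estimate \eqref{Ajlmax}, since \eqref{Tjl} follows from it together with the cancellation of $\mu_{0,\pm}$.

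\textbf{Step 1: Single-frequency-annulus rescaling.} Fix $\ell>0$. By the homogeneity relation $\tau_j^u = 2^{j(1+b)}\tau_0^u(\delta_{2^j}^b\cdot)$ and the anisotropic scaling $\xi\mapsto(2^{-j}\xi_1,2^{-jb}u\xi_2)$ built into \eqref{Ajludef}, the family $\{A_{j,\ell}^u\}_{j\in\bbZ}$ is a dilation-structured family; the essential content is concentrated where $|(2^{-j}\xi_1, 2^{-jb}u\xi_2)|\sim 2^\ell$, i.e. on a single anisotropic annulus. The key point from \cite{grsy-first} (the decomposition producing $\rho_0,\mu_{0,\pm}$) is that on the support of $\widehat{\rho_0}$ the ratio $\xi_1/(c_+\xi_2)$ is comparable to $b$, so the curve $(t,u\gamma_b(t))$ has non-vanishing curvature in the relevant cone and the measure $\rho_0$ (after the $\zeta_\ell$ localization) has Fourier decay $|\widehat{\rho_{0}}\zeta_\ell| \lesssim 2^{-\ell/2}$ with derivative bounds $2^{\ell/2}$ from stationary phase. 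This gives, for fixed $u$ and $j$, an $L^2$ bound $\|A_{j,\ell}^u\|_{2\to2}\lesssim 2^{-\ell/2}$, and the $j$-sum of the squares (Littlewood--Paley) is controlled since the $\zeta_\ell(2^{-j}\cdot)$ have bounded overlap in $j$.

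\textbf{Step 2: Linearizing the sup over $u$ via the covering number.} This is where $\cK_p(U,2^{-\ell})$ enters, and it is the main obstacle. Fix a measurable linearization $x\mapsto u(x)\in U$ and $x\mapsto j(x)\in\bbZ$ realizing the suprema. The anisotropic scaling by $(2^{-j},2^{-jb}u)$ means that, at dyadic scale $2^{-j}$, two values $u,u'$ are ``indistinguishable'' at frequency $2^\ell$ precisely when $|u-u'|\lesssim 2^{-\ell}$ (after passing to $U^r$ with $r=2^{-j}$ or the appropriate homogeneity normalization). Hence, grouping $u\in U^{2^{-j}}$ into $N(U^{2^{-j}},2^{-\ell})\le \sup_r N(U^r,2^{-\ell})$ many intervals of length $2^{-\ell}$, within each group the operators $A_{j,\ell}^u$ are within an acceptable error of a single ``frozen'' operator, and the sup over the $\sim N$ groups is handled by $\ell^p$-summation of $L^p$ norms. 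Interpolating the trivial $\ell^\infty$ bound against the $\ell^1$/$\ell^2$ bound from Step 1 produces exactly the factor $\delta^{1-1/p}N(U^r,\delta)^{1/p}=\cK_p(U,2^{-\ell})$ with $\delta=2^{-\ell}$; the decay $2^{-\ell/2}$ from curvature is absorbed into $\delta^{1-1/p}$ for $p$ near $2$ and must be tracked carefully for $p$ near $1$, which is the source of the $\vartheta_{p,\ell}$ weight. For $\ell\le(p-1)^{-1}$ one uses an $L^2$-based argument (square function + vector-valued Sobolev embedding in the $j,u$ variables) yielding the power $(p-1)^{3-10/p}$; for $\ell>(p-1)^{-1}$ one interpolates with an $L^2$ estimate that has polynomial-in-$\ell$ loss from the number of annuli involved in bootstrapping, giving $\ell^{7(2/p-1)}$. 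The precise bookkeeping — realizing the same $\vartheta_{p,\ell}$ that appears in Lemma \ref{ell=zero-lem} and Theorem \ref{logsthm}, and doing so uniformly in $U$ — is the delicate part, and I expect it to be carried out in \S\ref{sec:bootstrap} via a bootstrapping/interpolation argument starting from a favorable $L^2$ estimate and the $L^p$ bound for the strong maximal function \eqref{Mstrong}.

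\textbf{Step 3: Deducing \eqref{Tjl} from \eqref{Ajlmax}.} For the Hilbert-transform pieces $T_{j,\ell,\pm}^u$ one no longer takes a sup over $j$ inside, but rather sums in $j$; however, the measures $\mu_{0,\pm}$ carry cancellation ($\widehat{\mu_{0,\pm}}$ vanishes to infinite order at the origin is not literally needed, but $\widehat\phi_0(0)=0$ and the $|\xi|\gtrsim 1$ localization suffice), so $\sum_j T_{j,\ell,\pm}^u$ is an $L^2$-bounded singular-integral-type sum with the same $2^{-\ell/2}$ gain per annulus and bounded overlap in $j$ on the frequency side. One writes $\sum_j T_{j,\ell,\pm}^u f = \sum_j T_{j,\ell,\pm}^u \Delta_j f$ with $\Delta_j$ a Littlewood--Paley projection adapted to the anisotropic annulus, controls $\sup_u|\sum_j T_{j,\ell,\pm}^u f|$ by a square function $(\sum_j |\sup_u T_{j,\ell,\pm}^u \Delta_j f|^2)^{1/2}$, applies \eqref{Ajlmax}-type reasoning (the maximal estimate over $u$ and $j$) together with the vector-valued Fefferman--Stein inequality and the $L^p$ Littlewood--Paley inequality — here the extra factor $(p-1)^{-2}$ in \eqref{Tjl} compared to \eqref{Ajlmax} comes from the $L^p$ norms of these Littlewood--Paley / strong-maximal ingredients via \eqref{Mstrong}. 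The main obstacle overall remains Step 2: obtaining the \emph{sharp} $\cK_p(U,2^{-\ell})$ dependence, i.e.\ the correct interplay between the covering number at scale $2^{-\ell}$ and the $L^p$ norm, uniformly over $U$ and with explicit, summable constants $\vartheta_{p,\ell}$.
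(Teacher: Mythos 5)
Your outline correctly identifies several of the right ingredients (the van der Corput bound \eqref{eqn:vdC}, discretization of each dyadic block of $U$ at scale $2^{-\ell}$ via covering numbers, a derivative/Sobolev-type argument to freeze $u$ on intervals of length $2^{-\ell}$, and interpolation between the regimes $\ell\le (p-1)^{-1}$ and $\ell>(p-1)^{-1}$), but the central step is missing rather than proved. After linearizing, the supremum runs not only over the $\lesssim \sup_r N(U^r,2^{-\ell})$ frozen values of $u$ inside one block, but simultaneously over \emph{all} $j\in\bbZ$ and all dyadic blocks $n\in\bbZ$ of $U$; the ``$\ell^p$-summation of $L^p$ norms over the $\sim N$ groups'' you propose gives nothing uniform over this infinite family of scales, and no amount of single-operator decay $2^{-\ell(1-1/p)}$ by itself controls $\sup_{j,n}$. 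In the paper this is exactly the content of Proposition \ref{positiveopprop}, proved in \S\ref{sec:bootstrap} by the Nagel--Stein--Wainger/Christ bootstrap: one introduces the truncated maximal quantity $B_p(R)$ for the positive averages $\tau_j^u$, proves the vector-valued Lemma \ref{positive-prel-lemma} by interpolating the $q=p$ case (from \eqref{eqn:vdC}) with the $q=\infty$ case (from the pointwise domination \eqref{eqn:pointwiseA} by $M^{\mathrm{str}}(\tau_j^u*|f|)$ and $B_p(R)$ itself), and then closes the loop using the Littlewood--Paley reproducing identity \eqref{LPrepr} and \eqref{standardLP} to get $B_p(R)\lesssim (p-1)^{-2}+(p-1)^{p-5}B_p(R)^{1-p/2}$, hence $B_p(R)\lesssim (p-1)^{2-10/p}$ uniformly in $R$ and in $U$. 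This self-improvement is what produces the explicit constant $(p-1)^{3-10/p}2^{-\ell(p-1)/2}$ (and hence $\vartheta_{p,\ell}$ after Riesz--Thorin interpolation with the $L^2$ bound \eqref{L2endpt}); you acknowledge that such a bootstrap ``is expected to be carried out'' but do not supply it, so the proof of \eqref{Ajlmax} with the stated constants is not established.

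Two further points on your Step 3. First, \eqref{Tjl} is not deduced in the paper from \eqref{Ajlmax}: it requires the square-function estimate of Proposition \ref{positiveopprop} for the operators $T^u_{j,\ell,\pm}$ and their $s$-derivatives (which satisfy the same ingredients \eqref{eqn:vdC}, \eqref{eqn:pointwiseA}, \eqref{LPrepr}), so the bootstrap must be run for those families as well. Second, the pointwise domination you assert, namely controlling $\sup_{u}\bigl|\sum_j T^u_{j,\ell,\pm} f\bigr|$ by $\bigl(\sum_j |\sup_u T^u_{j,\ell,\pm}\Delta_j f|^2\bigr)^{1/2}$, is not a valid inequality: the supremum over $u$ does not pass inside the $j$-sum at the pointwise level. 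The correct route is to discretize $u$ first (sup over $n,i$ plus the derivative term), dominate that discrete sup by an $\ell^2$ sum in $(n,i)$, and only then remove the outer projections $P^{(1)}_{j,\ell}P^{(2)}_{j-n,\ell,b}$ by the dual, vector-valued Littlewood--Paley inequality \eqref{dualLP} at the level of $L^p$ norms; this is where the extra factor $(p-1)^{-2}$ in \eqref{Tjl} actually arises.
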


We claim that Proposition \ref{mainthm} implies Theorem \ref{logsthm}. 
Indeed, we have for non--negative $f$,
\[ \mathcal{M}^U f \lesssim \mathcal{M}_0 f + \sum_{\ell>0} \sup_{u\in U} \sup_{j\in\Z} |A_{j,\ell}^{u} f| \]
and thus \eqref{MUlogest} follows from part (i) of Lemma \ref{ell=zero-lem} and \eqref{Ajlmax}.
It remains to show \eqref{HUlogest}. But in view of the decomposition,
\[ H^{(u)} = S^{(u)} + \sum_{\pm}\sum_{\ell>0} \sum_{j\in\Z} T_{j,\ell,\pm}^u,  \]
this follows from part (ii) of Lemma \ref{ell=zero-lem} and \eqref{Tjl}. This finishes the proof of Theorem \ref{logsthm}.

We conclude this section with some estimates that will be used in the proof of Proposition \ref{mainthm}. 
We will harvest the required decay in $\ell$ from the following simple estimate.
For $p\in[1,2]$, $\ell>0$, $j\in\Z$, $u\in (0,\infty)$ we have
\begin{equation}\label{eqn:vdC}
\|A^{u}_{j,\ell} f\|_p \le C 2^{-\ell (1-1/p)} \|f\|_p.
\end{equation}

Indeed, the endpoint $p=2$ is a consequence of Plancherel's theorem and van der Corput's lemma, while $p=1$ follows because the convolution kernel of $A_{j,\ell}^u f$ is $L^1$--normalized.
Another key ingredient will be the following pointwise estimate. From the definition of $A_{j,\ell}^u$ in \eqref{Ajludef} we have for $\ell>0$, $j\in \bbZ$, $u\in (0,\infty)$ that
\begin{equation}\label{eqn:pointwiseA}
|A_{j,\ell}^u f | \le C M^{\mathrm{str}} (\tau_j^u*|f|).
\end{equation}
This follows because we have
\[ A_{j,\ell}^u f = (f * \tau_j^u) * \kappa_{j,\ell}^u,  \]
with $\kappa_{j,\ell}^u$ certain Schwartz functions that can be read off from the definitions \eqref{rho0def}, \eqref{Ajludef} and satisfy $|f*\kappa_{j,\ell}^u|\le C M^\mathrm{str} f$ with $C>0$ not depending on $j,\ell,u$.

We also need to introduce appropriate Littlewood--Paley decompositions.
Let $\chi^{(1)} $ be an even $C^\infty$ function supported on 
\[\{\xi_1: |c_+|b 2^{-3b-1} \le |\xi_1|\le |c_+|b 2^{3b+1}\}\]
and equal to $1$ for 
$ |c_+|b 2^{-3b} \le |\xi_1|\le |c_+|b 2^{3b}$.
Let $\chi^{(2)} $ be an even $C^\infty$ function supported on 
\[\{\xi_2: 2^{-2b-1} \le |\xi_2|\le2^{2b+1}\}\]
and equal to $1$ for 
$ 2^{-2b} \le |\xi_2|\le2^{2b}$.
Define $P^{(1)}_{k_1,\ell} $, $P^{(2)}_{k_2,\ell,b} $ by
\begin{align*}
\widehat{P^{(1)}_{k_1,\ell} f}(\xi)&=  \chi^{(1)} (2^{-k_1-\ell}\xi_1) \widehat f(\xi)
\\
\widehat{P^{(2)}_{k_2,\ell,b} f}(\xi)&=  \chi^{(2)} (2^{-k_2b-\ell}\xi_2) \widehat f(\xi)
\end{align*} 
Then  for $s\in[1,2^b]$, 
\Be\label{LPrepr}
 A_{j,\ell}^{2^{bn} s}=A_{j,\ell}^{2^{bn}s }
P^{(2)}_{j-n,\ell, b}  P^{(1)}_{j,\ell} 
=
 P^{(1)}_{j,\ell} 
 P^{(2)}_{j-n,\ell, b} 
A_{j,\ell}^{2^{bn} s}.
\Ee

For $p\in(1,2]$ we have the Littlewood--Paley inequalities 
\Be \label{standardLP}
\Big\|\Big(\sum_{k_1\in \bbZ}\sum_{k_2\in \bbZ}
\big|
P^{(1)}_{k_1,\ell} P^{(2)}_{k_2,\ell,b} f\big|^2\Big)^{1/2}
\Big\|_p \le C (p-1)^{-2} \|f\|_p
\Ee
and 
\Be \label{dualLP}
\Big\|\sum_{k_1\in \bbZ}\sum_{k_2\in \bbZ}
P^{(1)}_{k_1,\ell} P^{(2)}_{k_2,\ell,b} f_{k_1,k_2} 
\Big\|_p \le C(p-1)^{-2}
\Big\|\Big(\sum_{k_1\in\Z}\sum_{k_2\in\Z} |f_{k_1,k_2}|^2\Big)^{1/2}\Big\|_p,
\Ee
which also hold for Hilbert space valued functions. Similarly as in \eqref{Mstrong}, each of these two inequalities follows from two applications of appropriate one-dimensional Littlewood--Paley inequalities and the fact that these come with a constant of $(p-1)^{-1}$ each, owing to Marcinkiewicz interpolation with the weak $(1,1)$ endpoint.

\section{A positive bilinear operator}\label{sec:bootstrap}

In this section we are given for every $n\in \bbZ$ an at most  countable set \[ \fS(n) =\{s_{n}(i): i=1,2,\dots \} \,\subset \,  [1,2^b]. \] 

\begin{prop} \label{positiveopprop}
There is a constant $C$ independent of the choice of the sets $\fS(n)=\{s_{n}(i)\}$, $n\in \bbN$,  such that for $1<p\le 2$ and $\ell>0$,
\begin{multline*} 
\Big\| \Big(\sum_{j,n\in\Z} \sum_{i\in\N}\big |w_n(i)\,
\mathcal{A}_{j,\ell}^{2^{bn} s_{n}(i) }
f\big|^2\Big)^{1/2} \Big\|_p
\\\le C (p-1)^{3-\frac {10}p} 2^{-\ell (p-1)/2} \sup_{n\in\Z}\|w_n\|_{\ell^p} \|f\|_p
\end{multline*}
for all functions $f$ and $w_n:\N\to\C$. This holds for $\mathcal{A}_{j,\ell}^{2^{bn} s_n(i)}$ being any one of the following:
\[ A^{2^{bn} s_n(i)}_{j,\ell},\; 2^{-\ell}\frac{d}{ds} A^{2^{bn} s}_{j,\ell} |_{s=s_n(i)},\; T^{2^{bn} s_n(i)}_{j,\ell,\pm},\; 2^{-\ell} \frac{d}{ds}T^{2^{bn} s}_{j,\ell,\pm} \big|_{s=s_n(i)}. \]
\end{prop}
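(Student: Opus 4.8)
The plan is to prove the square function bound by first isolating the frequency localization built into \eqref{LPrepr}, then running a bootstrap/interpolation argument between an $L^2$ estimate that exploits orthogonality and an $L^p$ estimate ($p$ close to $1$) that only uses the $L^1$-normalization of the kernels together with the strong maximal function bound \eqref{eqn:pointwiseA}. Throughout I will treat $\cA^{u}_{j,\ell}$ as a generic operator of one of the four listed types; all four satisfy the same pointwise bound $|\cA^u_{j,\ell}f|\le C M^{\mathrm{str}}(\tau^u_j*|f|)$ as in \eqref{eqn:pointwiseA} (for the derivative operators one differentiates the explicit Schwartz kernel $\kappa^u_{j,\ell}$ in $u$, picking up a harmless factor that the $2^{-\ell}$ normalization absorbs), the same decay estimate \eqref{eqn:vdC}, namely $\|\cA^u_{j,\ell}f\|_p\le C 2^{-\ell(1-1/p)}\|f\|_p$, and the frequency support relations of \eqref{LPrepr}.

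\textbf{Step 1: the $L^2$ estimate.} First I would prove the proposition at $p=2$, where the claimed bound reads
\[
\Big\| \Big(\sum_{j,n\in\Z}\sum_{i\in\N}\big|w_n(i)\,\cA^{2^{bn}s_n(i)}_{j,\ell}f\big|^2\Big)^{1/2}\Big\|_2 \le C\, 2^{-\ell/2}\,\sup_n\|w_n\|_{\ell^2}\,\|f\|_2.
\]
By Plancherel this is a pointwise estimate on the frequency side: it suffices to show that for a.e.\ $\xi$,
\[
\sum_{j,n}\sum_i |w_n(i)|^2\,|\widehat{\cA^{2^{bn}s_n(i)}_{j,\ell}}(\xi)|^2 \le C\,2^{-\ell}\,\sup_n\|w_n\|_{\ell^2}^2.
\]
For fixed $\xi$ and fixed $j$, the factor $\zeta_\ell(2^{-j}\xi_1, 2^{-jb}u\xi_2)$ forces $2^{-j}|\xi_1|\sim 2^\ell$, so at most $O(1)$ values of $j$ contribute; for each such $j$, the factor $P^{(2)}_{j-n,\ell,b}$ in \eqref{LPrepr} forces $2^{-(j-n)b}|\xi_2|\sim 2^\ell$, so at most $O(1)$ values of $n$ contribute. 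For the surviving $(j,n)$ the inner sum over $i$ is $\sum_i |w_n(i)|^2 |\widehat{\cA^{2^{bn}s_n(i)}_{j,\ell}}(\xi)|^2$; the van der Corput bound underlying \eqref{eqn:vdC} gives $|\widehat{\cA^{2^{bn}s}_{j,\ell}}(\xi)|\le C 2^{-\ell/2}$ uniformly in $s\in[1,2^b]$, and the sum over $i$ is then controlled by $C 2^{-\ell}\|w_n\|_{\ell^2}^2$. (One should be slightly careful that the $s_n(i)$ need not be separated, so one genuinely uses the uniform bound on the symbol rather than any orthogonality in $i$.) This completes the $p=2$ case, with the sharp $2^{-\ell/2}$ gain.

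\textbf{Step 2: the $L^p$ estimate near $p=1$ and interpolation.} The harder endpoint is the bound for $p$ near $1$; this is where the factor $(p-1)^{3-10/p}$ and the $\ell^p$-norm of the weights come from, and it is the main obstacle. The idea is to bound the vector-valued square function crudely, putting the $\ell^2$ sum inside: by Minkowski's inequality in $\ell^2\hookrightarrow\ell^p$ (valid since $p\le 2$) and then the triangle inequality,
\[
\Big\|\Big(\sum_{j,n,i}|w_n(i)\,\cA^{2^{bn}s_n(i)}_{j,\ell}f|^2\Big)^{1/2}\Big\|_p \le \Big(\sum_{j,n,i}|w_n(i)|^p\,\|\cA^{2^{bn}s_n(i)}_{j,\ell}f\|_p^p\Big)^{1/p}.
\]
This is far too lossy as stated (there is no decay in $j$ or $n$), so instead one reinstates the Littlewood–Paley projections $P^{(1)}_{j,\ell}P^{(2)}_{j-n,\ell,b}$ from \eqref{LPrepr}, applies the vector-valued Littlewood–Paley inequality \eqref{standardLP} (costing $(p-1)^{-2}$) to pull out $(\sum_{j,n}|P^{(1)}_{j,\ell}P^{(2)}_{j-n,\ell,b}f|^2)^{1/2}$, and estimates the remaining "diagonal" piece using the pointwise bound \eqref{eqn:pointwiseA} together with the Fefferman–Stein vector-valued inequality for $M^{\mathrm{str}}$ (again costing $(p-1)^{-2}$), plus a single scalar gain of $2^{-\ell(1-1/p)}$ from \eqref{eqn:vdC} applied to $\sup_i\cA^{\cdots}_{j,\ell}$; this is the scheme used in \S\ref{sec:mainthmpf}. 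The upshot is an $L^p$ bound of the shape
\[
\Big\|\Big(\sum_{j,n,i}|w_n(i)\,\cA^{2^{bn}s_n(i)}_{j,\ell}f|^2\Big)^{1/2}\Big\|_p \le C(p-1)^{-C'} 2^{-\ell(1-1/p)}\sup_n\|w_n\|_{\ell^p}\|f\|_p,\qquad 1<p\le 2.
\]
Interpolating this with the $p=2$ estimate of Step 1 — using that $\sup_n\|w_n\|_{\ell^q}$ interpolates correctly and that $2^{-\ell/2}$ vs.\ $2^{-\ell(1-1/p)}$ interpolates to $2^{-\ell(p-1)/2}$ when one lands at exponent $p$ — yields the asserted bound
\[
C(p-1)^{3-\frac{10}{p}}\,2^{-\ell(p-1)/2}\,\sup_n\|w_n\|_{\ell^p}\,\|f\|_p,
\]
where the precise exponent $3-10/p$ is obtained by tracking the powers of $(p-1)^{-1}$ through the interpolation (two from \eqref{standardLP}, two from Fefferman–Stein, plus the loss incurred by interpolating the crude $L^1$-type bound against the clean $L^2$ bound). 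The main technical obstacle is exactly this bookkeeping: one must arrange the $L^p$ side so that the loss in $p-1$ is no worse than what the stated exponent allows, which forces one to use the Littlewood–Paley structure \eqref{LPrepr} rather than a naive triangle-inequality estimate, and to be economical about where the factors $(p-1)^{-1}$ are spent.
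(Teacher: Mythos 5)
Your Step 1 is fine (it is essentially the $p=2$, $q=2$ case of the paper's argument, where the support of $\zeta_\ell(2^{-j}\xi_1,2^{-jb}u\xi_2)\widehat{\rho_0}(\cdots)$ pins down $O(1)$ values of $j$ and $n$ for each fixed $\xi$). The genuine gap is in Step 2. The inequality you assert there,
\[
\Big\|\Big(\sum_{j,n,i}|w_n(i)\,\cA^{2^{bn}s_n(i)}_{j,\ell}f|^2\Big)^{1/2}\Big\|_p \lc (p-1)^{-C'}\,2^{-\ell(1-\frac1p)}\,\sup_n\|w_n\|_{\ell^p}\,\|f\|_p ,\qquad 1<p\le 2,
\]
is not established by your sketch, and it is in fact stronger than the proposition itself (since $2^{-\ell(1-1/p)}\le 2^{-\ell(p-1)/2}$), which should have been a warning sign. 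The only estimate that decouples cleanly for $p<2$ is the one with inner exponent $p$: it gives $\big(\sum_{j,n}\|P^{(1)}_{j,\ell}P^{(2)}_{j-n,\ell,b}f\|_p^p\big)^{1/p}$ on the right, and for $p<2$ this is \emph{not} controlled by $\|f\|_p$ (Littlewood--Paley pieces satisfy $\|f\|_p\lc(\sum_k\|P_kf\|_p^p)^{1/p}$, not the reverse). Your proposed repair --- ``pointwise bound \eqref{eqn:pointwiseA} plus Fefferman--Stein for $M^{\mathrm{str}}$ plus a single scalar gain from \eqref{eqn:vdC} applied to $\sup_i\cA$'' --- does not close this: \eqref{eqn:pointwiseA} throws away all $\ell$-decay and dominates $\cA^{u}_{j,\ell}g$ by $M^{\mathrm{str}}(\tau^u_j*|g|)$, after which the remaining object, a weighted sup (or square sum) over the dilation parameters $n,i$ of the positive averages $\tau_j^{2^{bn}s_n(i)}*|g|$, is precisely the maximal operator $\fM_R$ of \eqref{BpR}, whose $L^p$ norm with constant $\sup_n\|w_n\|_{\ell^p}$ is the unknown quantity $B_p(R)$; it is not controlled by $M^{\mathrm{str}}$ or by any scalar application of \eqref{eqn:vdC}. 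Passing from the scalar $L^p$ bound per $(j,n)$ to the $\ell^2$-valued bound against $\|(\sum_{j,n}|P^{(1)}_{j,\ell}P^{(2)}_{j-n,\ell,b}f|^2)^{1/2}\|_p$ is exactly the step that requires a vector-valued mechanism, and this is why the paper runs the Nagel--Stein--Wainger/Christ bootstrap: Lemma \ref{positive-prel-lemma} interpolates between the decoupled $q=p$ case (full van der Corput decay, $\ell^p$ weights) and the $q=\infty$ case (no decay, cost $B_p(R)$ via positivity and \eqref{Mstrong}), landing at $q=2$ with the reduced decay $2^{-\ell(p-1)/2}$ and the factor $B_p(R)^{1-p/2}$; summing the resulting geometric series in $\ell$ then yields $B_p(R)\lc(p-1)^{-2}+(p-1)^{p-5}B_p(R)^{1-p/2}$, hence $B_p(R)\lc(p-1)^{2-10/p}$, and reinserting this gives the exponent $3-10/p$. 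Without this self-improvement (or a substitute for it), your Step 2 is unsupported, and your derivation of the exponent $3-10/p$ is bookkeeping without a basis.

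A secondary point: even granting both endpoints, your final interpolation does not directly produce the constant $\sup_n\|w_n\|_{\ell^p}$. Interpolating for fixed weights gives $\sup_n\|w_n\|_{\ell^{p_0}}^{1-\theta}\sup_n\|w_n\|_{\ell^2}^{\theta}$, which by log-convexity dominates $\sup_n\|w_n\|_{\ell^p}$ rather than being dominated by it; one needs an extra normalization or analytic-family argument in the weights (or, as in the paper's \S\ref{sec:mainthmpf}, one only ever interpolates with unit weights, where $\|w_n\|_{\ell^q}=\cN_{n,\ell}(U)^{1/q}$ and the geometric mean is exactly $\cN_{n,\ell}(U)^{1/p}$). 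This is fixable, but the main obstruction remains the missing vector-valued estimate discussed above.
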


We will only detail the proof in the case $\mathcal{A}_{j,\ell}^{2^{bn} s_n(i)}=A^{2^{bn} s_n(i)}_{j,\ell}$. The other cases follow \emph{mutatis mutandis}. To this end note that the corresponding variants of the main ingredients \eqref{eqn:vdC}, \eqref{eqn:pointwiseA}, \eqref{LPrepr} also hold for each of the other cases, the underlying reasoning being identical in each case.

In the proof of the proposition we use a bootstrapping argument by Nagel, Stein and Wainger 
\cite{NSW} in a simplified and improved form given in unpublished work by Christ (see \cite{Carbery88} for an exposition).

We first introduce an auxiliary maximal operator. For $R\in \bbN$ let
$$\fM_R[f,w](x)= \sup_{-R\le j,n\le R} \sup_{i\in \bbN} \big|w_n(i)\,\tau_j^{2^{bn} s_{n}(i) } 
\!*\!f(x)\big|.
$$
We let $B_p(R)$ be the best constant $C$ in the inequality
\[\|\fM_R[f,w]\|_p
\le C
\sup_{n\in\Z}\|w_n\|_{\ell^p}  \|f\|_p,\]
that is,
\Be\label{BpR}
B_{p}(R) =\sup\{ \|\fM_R[f,w] \|_p: \,\|f\|_p\le 1, \,\sup_{n\in\Z} \|w_n\|_{\ell^p} \le 1\}.
\Ee 
The positive number $B_p(R)$ is finite, as from the 
uniform $L^p$-boundedness of the operator
$f\mapsto \tau_j^u*f$  we have  $B_p(R)\le C(2R+1)^{2/p}$. It is our objective to show that $B_p(R)$ is independent of $R$. 
More precisely, we claim that there is a constant $C$ independent of the choice of the sets $\fS(n)$,  such that for $1<p\le 2$,
\Be \label{positiveoppropstatement}
B_p(R) \le C (p-1)^{2-10/p}.
\Ee

We begin with an estimate for a vector--valued  operator.

\begin{lem} \label{positive-prel-lemma}
Let $1<p\le 2$, $p\le q\le\infty$. Then
\begin{multline} \label{positive-prel}
\Big\|\Big(\sum_{-R\le j,n\le R} \sum_{i\in\N} |w_n(i)\, A^{2^{bn} s_n(i)}_{j,\ell} 
g_{j,n} |^q\Big)^{1/q} \Big\|_p
\\ \le C (p-1)^{-2(1-\frac pq)}
  B_p(R) ^{1-\frac pq} 2^{-\ell (1-\frac 1p) \frac pq} 
\sup_{n\in\Z}\|w_n\|_{\ell^p} \Big\|\Big(\sum_{j,n\in\Z} |g_{j,n} |^q\Big)^{1/q}\Big\|_p
\end{multline}
\end{lem}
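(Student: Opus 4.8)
The plan is to derive \eqref{positive-prel} by complex interpolation in the summation exponent $q$ between the endpoints $q=p$ and $q=\infty$. Fix the set $\fS(n)=\{s_n(i)\}$ and the weights $w=(w_n)$, and regard
$T\colon (g_{j,n})_{-R\le j,n\le R}\mapsto \big(w_n(i)\,A^{2^{bn}s_n(i)}_{j,\ell}g_{j,n}\big)_{-R\le j,n\le R,\,i\in\N}$
as a single linear operator, whose norm we estimate as a map $L^p(\ell^q)\to L^p(\ell^q)$ — the source sequence space being $\ell^q$ over $(j,n)$ and the target $\ell^q$ over $(j,n,i)$. At $q=p$ one has $\|Tg\|_{L^p(\ell^p)}^p=\sum_{-R\le j,n\le R}\sum_{i\in\N}\|w_n(i)A^{2^{bn}s_n(i)}_{j,\ell}g_{j,n}\|_p^p$, and \eqref{eqn:vdC} bounds $\|w_n(i)A^{2^{bn}s_n(i)}_{j,\ell}g_{j,n}\|_p$ by $C\,2^{-\ell(1-1/p)}|w_n(i)|\,\|g_{j,n}\|_p$; summing first in $i$ produces the factor $\|w_n\|_{\ell^p}^p\le\sup_n\|w_n\|_{\ell^p}^p$, and the remaining sum in $j,n$ gives $\big\|(\sum_{j,n}|g_{j,n}|^p)^{1/p}\big\|_p^p$. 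Hence $\|T\|_{L^p(\ell^p)\to L^p(\ell^p)}\le C\,2^{-\ell(1-1/p)}\sup_n\|w_n\|_{\ell^p}$, which is \eqref{positive-prel} for $q=p$ (the powers of $p-1$ and of $B_p(R)$ being trivial there).

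For the endpoint $q=\infty$, set $G:=\sup_{j,n}|g_{j,n}|$. By \eqref{eqn:pointwiseA}, the positivity of the measures $\tau_j^u$, and the monotonicity of $M^{\mathrm{str}}$, we get the pointwise bound $|w_n(i)A^{2^{bn}s_n(i)}_{j,\ell}g_{j,n}|\le C\,M^{\mathrm{str}}\big(|w_n(i)|\,\tau^{2^{bn}s_n(i)}_j*G\big)$. Since $\sup_\alpha M^{\mathrm{str}}F_\alpha\le M^{\mathrm{str}}(\sup_\alpha F_\alpha)$ for nonnegative $F_\alpha$, taking the supremum over $-R\le j,n\le R$ and $i\in\N$ yields $\sup_{j,n,i}|w_n(i)A^{2^{bn}s_n(i)}_{j,\ell}g_{j,n}|\le C\,M^{\mathrm{str}}(\fM_R[G,w])$. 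Then \eqref{Mstrong} and the definition \eqref{BpR} of $B_p(R)$ give $\|T\|_{L^p(\ell^\infty)\to L^p(\ell^\infty)}\le C(p-1)^{-2}B_p(R)\sup_n\|w_n\|_{\ell^p}$, which is \eqref{positive-prel} for $q=\infty$.

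For $p<q<\infty$ set $\theta=1-p/q\in(0,1)$, so that $1/q=(1-\theta)/p+\theta/\infty$. By the complex interpolation identity $[L^p(\ell^{p}),L^p(\ell^{\infty})]_\theta=L^p(\ell^q)$, applied to both the source and the target sequence spaces, and since complex interpolation of a fixed linear operator comes with constant $1$, we obtain $\|T\|_{L^p(\ell^q)\to L^p(\ell^q)}\le\|T\|_{L^p(\ell^p)\to L^p(\ell^p)}^{1-\theta}\,\|T\|_{L^p(\ell^\infty)\to L^p(\ell^\infty)}^{\theta}$. Substituting the two endpoint bounds gives exactly $C\,(p-1)^{-2(1-p/q)}B_p(R)^{1-p/q}2^{-\ell(1-1/p)p/q}\sup_n\|w_n\|_{\ell^p}$ with an absolute constant $C$, and the boundary cases $q=p,\infty$ were already handled directly.

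The routine points to verify are the validity of complex interpolation with the $\ell^\infty$ endpoint inside the mixed-norm space $L^p(\ell^q)$ — standard since the resulting exponent $q$ is finite — and the bookkeeping that one and the same operator $T$, with $w$ and the $s_n(i)$ frozen, underlies all three estimates, so that the endpoint norm bounds indeed interpolate. I expect the genuine content to lie entirely in the $q=\infty$ estimate, specifically the step that, via \eqref{eqn:pointwiseA} and the commutation of $M^{\mathrm{str}}$ with suprema, feeds the auxiliary maximal operator $\fM_R$ back into itself and thereby produces the factor $B_p(R)$; the $q=p$ estimate and the interpolation are then soft.
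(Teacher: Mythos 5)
Your proof is correct and follows essentially the same route as the paper: the endpoint $q=p$ via \eqref{eqn:vdC}, the endpoint $q=\infty$ via \eqref{eqn:pointwiseA}, the positivity of $\tau_j^u$, \eqref{Mstrong} and the definition \eqref{BpR} of $B_p(R)$, and then interpolation in the inner exponent with the outer $L^p$ fixed (the paper leaves this last step as ``follows by interpolation,'' which is exactly the complex interpolation $[L^p(\ell^p),L^p(\ell^\infty)]_\theta=L^p(\ell^q)$ you invoke).
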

\begin{proof} 
The case  $q=p$  of \eqref{positive-prel} follows from \eqref{eqn:vdC}. For $q=\infty$ we use \eqref{eqn:pointwiseA} to estimate
\begin{align*} 
&\big\|\sup_{-R\le j,n\le R}\sup_{i\in\N} |w_n(i)\, A^{2^{bn} s_n(i)}_{j,\ell} g_{j,n} |\big\|_p
\\&\le C \big\|\sup_{-R\le j,n \le R}\sup_{i\in\N} |w_n(i)|\, M^{\mathrm{str}} [\tau^{2^{bn} s_n(i)}_{j}\!*\! 
|g_{j,n}| ]\big\|_p
\\&\le C \big\|M^{\mathrm{str}} \big[\sup_{-R\le j,n\le R}\sup_{i\in\N} |w_n(i)|\,  \tau^{2^{bn} s_n(i)}_{j} \!*\!(\sup_{j',n'\in\Z}|g_{j',n'}|) \big]\big\|_p
\end{align*}
where we have used the positivity of the operators $f\mapsto \tau_j^u*f$. By \eqref{Mstrong}
we can dominate the last displayed expression  by
\begin{align*}
 & C'(p-1)^{-2}  \big\|\sup_{-R\le j,n\le R}\sup_{i\in\N} |w_n(i)|\,  \tau^{2^{bn} s_n(i)}_{j}\!*\! [\sup_{j',n'\in\Z}|g_{j',n'}|] \big\|_p
\\
&\lc (p-1)^{-2} B_p(R) \,\sup_{n\in\Z}\|w_n\|_{\ell^p}
\big\|
\sup_{j',n'\in\Z}|g_{j',n'}| \big\|_p
\end{align*}
which establishes the case $q=\infty$. The case $p<q<\infty$ follows by interpolation.
\end{proof}

\begin{proof}[Proof of Proposition \ref{positiveopprop}]
We use the decomposition $\tau_j^u*f= \sum_{\ell=0}^\infty
A^u_{j,\ell} f$.
By \eqref{ell=0Mstr} 
we get
\Be\notag
\Big\| \sup_{j,n\in\Z} \sup_{i\in \bbN}\big |w_n(i)\,
A_{j,0}^{2^{bn} s_{n}(i) } 
f\big| \Big\|_p
\lc  (p-1)^{-2}  \sup_{n\in\Z}
\|w_n\|_{\ell^\infty}  \|f\|_p.
\Ee
For $\ell>0$ we have, 
\[\Big\| \sup_{-R\le j,n\le R} \sup_{i\in \bbN}\big |w_n(i)\,
A_{j,\ell}^{2^{bn} s_{n}(i) }
f\big| \Big\|_p
\le \Big\| \Big(\sum_{-R\le j,n\le R} \sum_{i\in\N}\big |w_n(i)\,
A_{j,\ell}^{2^{bn} s_{n}(i) }
f\big|^2\Big)^{1/2} \Big\|_p\]
and, by  \eqref{LPrepr} and Lemma \ref{positive-prel-lemma} for $q=2$, and \eqref{standardLP},
\begin{align} \label{sqrfctbdAjl}
&\Big\| \Big(\sum_{-R\le j,n\le R} \sum_{i\in\N}\big |w_n(i)\,
A_{j,\ell}^{2^{bn} s_{n}(i) }
f\big|^2\Big)^{1/2} \Big\|_p
\\
\notag
&\lc (p-1)^{-2(1-\frac p2)} B_p(R)^{1-\frac p2}2^{-\ell(1-\frac 1p)\frac p2}
\sup_{n\in\Z}\|w_n\|_{\ell^p} 
\Big\|\Big(\sum_{j,n\in\Z} \big|P^{(2)}_{j-n,\ell, b}  P^{(1)}_{j,\ell} f\big|^2\Big)^{1/2}\Big\|_p
\\
\notag
&\lc (p-1)^{p-4} 2^{-\ell (p-1)/2} B_p(R)^{1-p/2} 
\sup_{n\in\Z}\|w_n\|_{\ell^p} \|f\|_p.
\end{align}
This implies, for $1<p\le 2$ 
\begin{align*} 
B_p(R) &\lc \Big[(p-1)^{-2} +\sum_{\ell>0}
(p-1)^{p-4} 2^{-\ell (p-1)/2} B_p(R)^{1-p/2} \Big]
\\
&\lc (p-1)^{-2} + (p-1)^{p-5} B_p(R)^{1-p/2}
\end{align*} 
which leads to
\[ B_p(R)\lc (p-1)^{2-10/p}.\]
If we use this inequality in \eqref{sqrfctbdAjl}
and observe \[p-4+(2-10/p)(1-p/2)=3-10/p,\] then the claimed inequality in Proposition \ref{positiveopprop} follows by the monotone convergence theorem.
\end{proof}

\section{Proof of Proposition \ref{mainthm}}\label{sec:mainthmpf}
For $n\in \bbZ$ let $U_n\subset [1, 2^b] $
be defined by $$U_n= \{ 2^{-bn} u: u\in [2^{bn}, 2^{b(n+1)}]\cap U\}$$ and let 
$$\cN_{n,\ell}(U)= \#\big\{k:  [2^{-\ell} k, 2^{-\ell}(k+1))\cap U_n\neq \emptyset \big\}.
$$
Then we have
\[ 2^{-\ell (1-\frac1p)} \sup_{n\in\Z} \mathcal{N}_{n,\ell}(U) \approx \cK_p(U, 2^{-\ell}) \]
We cover each set $U_n$ with dyadic intervals of the form $$I_{k,\ell}=[k2^{-\ell}, (k+1)2^{-\ell})$$ where $k\in \bbN$.
Denote by $\fS_{n,\ell}$ 
the left endpoints of these intervals and note $\cN_{n,\ell}(U) = \#\fS_{n,\ell}$. We label the set of points in $\fS_{n,\ell} $, by 
$\{s_{n,\ell}(i)\}_{i=1}^{\cN_{n,\ell}(U)}$ and write
\begin{align*} &\sup_{j\in\Z} \sup_{u\in U} |A_{j,\ell}^u f(x)| = \sup_{j\in\Z} \sup_{n\in\Z} \sup_{s\in U_n} |A_{j,\ell}^{2^{nb} s} f(x)| 
\\&\le \sup_{j,n\in\Z} \sup_{i=1,\dots \cN_{n,\ell}(U)} 
|A^{2^{nb}s_{n,\ell}(i)}_{j,\ell} f(x)| 
\\&\qquad +
\sup_{j,n\in\Z} \sup_{i=1,\dots \cN_{n,\ell}(U)} \int_0^{2^{-\ell} } \Big|\frac{d}{d\alpha} 
A^{2^{nb}(s_{n,\ell}(i)+\alpha)}_{j,\ell} f(x)\Big| d\alpha.\end{align*}
Hence 
\begin{multline*}
\Big\| \sup_{j\in\Z}\sup_{u\in U}
 |A_{j,\ell}^u f| \Big\|_p
\, \le\Big\| \Big(\sum_{j,n\in\Z}\sum_{i=1}^{\cN_{n,\ell}(U)} |A_{j,\ell}^{2^{nb} s_{n,\ell}(i) } f|^2\Big)^{1/2}\Big\|_p
\\+ \int_{0}^{2^{-\ell} }
\Big\|
\Big(\sum_{j,n\in\Z}\sum_{i=1}^{\cN_{n,\ell}(U)} \Big| \frac{d}{d\alpha} A_{j,\ell}^{2^{nb} (s_{n,\ell}(i) +\alpha) } f|^2\Big)^{1/2}\Big\|_p d\alpha
\end{multline*}
and by part (ii) of Proposition \ref{positiveopprop} 
 both expressions on the right hand side can be estimated by
 \Be\label{pnear1} C(p-1)^{3-10/p} 2^{-\ell(p-1)/2}\sup_{n\in\Z} \cN_{n,\ell}(U)^{1/p} \|f\|_p.
 \Ee
 This estimate is efficient for $1<p<1+\ell^{-1}$.
 Note that in this range $2^{-C\ell (1-1/p)}\approx 1$ and $\cN_{n,\ell}(U)^{1/p}\approx \cK_p(U, 2^{-\ell})$. For $p=2$ we have the inequality
 \begin{align}
 &\Big\| \Big(\sum_{j,n\in\Z}\sum_{i=1}^{\cN_{n,\ell}(U)} |A_{j,\ell}^{2^{nb} s_{n,\ell}(i) } f|^2\Big)^{1/2}\Big\|_2
 \label{L2endpt}
\\&\qquad+ \int_{0}^{2^{-\ell} }
\Big\|
\Big(\sum_{j,n\in\Z}\sum_{i=1}^{\cN_{n,\ell}(U)} \Big| \frac{d}{d\alpha} A_{j,\ell}^{2^{nb} (s_{n,\ell}(i) +\alpha) } f|^2\Big)^{1/2}\Big\|_2 d\alpha
\notag
 \\&\lc 2^{-\ell/2 } \sup_{n\in\Z} \cN_{n,\ell}(U)^{1/2} \|f\|_2.
 \notag
\end{align}

For $p_\ell:=1+\ell^{-1} <p<2$ we use the Riesz--Thorin interpolation theorem
(together with the  fact that $ (p_\ell-1) ^{C/\ell}\approx_C 1$ and $(p_\ell-1)^{-A}=\ell^A$).
We then obtain for $p_\ell<p<2$
 \begin{align}
 &\Big\| \Big(\sum_{j,n\in\Z}\sum_{i=1}^{\cN_{n,\ell}(U)} |A_{j,\ell}^{2^{nb} s_{n,\ell}(i) } f|^2\Big)^{1/2}\Big\|_p
 \notag
\\ &\qquad+ \int_{0}^{2^{-\ell} }
\Big\|
\Big(\sum_{j,n\in\Z}\sum_{i=1}^{\cN_{n,\ell}(U)} \Big| \frac{d}{d\alpha} A_{j,\ell}^{2^{nb} (s_{n,\ell}(i) +\alpha) } f|^2\Big)^{1/2}\Big\|_p d\alpha
\notag
 \\&\lc 2^{-\ell(1-\frac 1p) } \sup_{n\in\Z} \cN_{n,\ell}(U)^{1/p} \ell^{7(\frac 2p-1)} \|f\|_p.
 \label{Lpintermed}
\end{align}
Thus we have established \eqref{Ajlmax}.
The proof of \eqref{Tjl} 
is similar but the reduction to a square--function estimate requires one more use of a Littlewood--Paley estimate. We have, using the analogue of \eqref{LPrepr} for $T^{2^{bn} s}_{j,\ell,+}$ 
\begin{align*}
&\Big\| \sup_{n\in\Z}\sup_{u\in U\cap[2^{nb}, 2^{(n+1)b}]}
 \Big|\sum_{j\in\Z} T_{j,\ell,+}^u f\Big| \Big\|_p
\\
&\le\Big\| \Big(\sum_{n\in\Z}\sum_{i=1}^{\cN_{n,\ell}}\Big |\sum_{j\in\Z} P^{(1)}_{j,\ell} P^{(2)}_{j-n, \ell, b}
 T_{j,\ell,+}^{2^{nb} s_{n,\ell}(i) } f\Big|^2\Big)^{1/2}\Big\|_p
\\ &\quad + \int_{0}^{2^{-\ell} }
\Big\|
\Big(\sum_{n\in\Z}\sum_{i=1}^{\cN_{n,\ell}(U)} \Big|\sum_{j\in \bbZ}P^{(1)}_{j,\ell} P^{(2)}_{j-n, \ell, b} \frac{d}{d\alpha} T_{j,\ell,+}^{2^{nb} (s_{n,\ell}(i) +\alpha) } f\Big|^2\Big)^{1/2}\Big\|_p d\alpha
\end{align*}
which by 
 \eqref{dualLP} is bounded by
 \begin{align*}
  C(p-1)^{-2}&\biggl[
\Big\| \Big(\sum_{n\in\Z}\sum_{i=1}^{\cN_{n,\ell}(U)}\sum_{j\in\Z} |
 T_{j,\ell,+}^{2^{nb} s_{n,\ell}(i) } f|^2\Big)^{1/2}\Big\|_p
\\ &\,+ \int_{0}^{2^{-\ell} }
\Big\|
\Big(\sum_{n\in\Z}\sum_{i=1}^{\cN_{n,\ell}(U)} \sum_{j\in\Z}|
\frac{d}{d\alpha} T_{j,\ell,+}^{2^{nb} (s_{n,\ell}(i) +\alpha) } f|^2\Big)^{1/2}\Big\|_p d\alpha\biggr].
\end{align*}
From here on the estimation is exactly analogous to the 
previous square function -- just replace 
$A_{j,\ell}^u$ with $T_{j,\ell,+}^u$. 
The arguments for the corresponding terms 
with $T_{j,\ell,-}^u$ are similar (or could be reduced to the previous case by a change of variable, and curve). This concludes the proof of Theorem \ref{mainthm}.

\section{Lower bounds for $p\le 2$}\label{Mink-lower-bounds} 
As mentioned before the lower bound $(\log \fN(U))^{1/2}$ for  $\|\cH^U\|_{L^p\to L^p}$ , based on ideas of Karagulyan \cite{Kar07}, was established in \cite{grsy-first}.
We now show the easier lower bound 
in terms of the quantity 
$\sup_{\delta>0} \cK_p(U,\delta)$ 
(where we only have to consider the cases $\delta<1$).
The same calculation gives the same type of lower bound for $\|\cM^U\|_{L^p\to L^p}$.

By rescaling in the second variable and reflection we may assume that $c_+=1$. For $u\in U$ and $\delta\in (0,1)$ we define 
\[V_{\delta}(u)=
\{(x_1, x_2): 1\le x_1\le 2, |x_2-u x_1^b|\le 
\delta/4 \}\]
and let $f_\delta$ be the characteristic function of the ball of radius $\delta$ centered at the origin.
Observe that for $ 1\le x_1,u\le 2$ , $\eps<1$ 
and $x_1\le t\le x_1+\eps\delta $ we have
$u(t^b-x_1^b)\le 2b \cdot3^{b-1} \eps\delta$. Thus for 
$\eps_b= (8b \cdot3^{b-1})^{-1}$ we get 
 $f_\delta (x_1-t, x_2- u t^b)=1$ and thus
\[H^{(u)} f_\delta (x) \ge \frac 13 \int_{x_1}^{x_1+ \eps_b\delta}   f_\delta (x_1-t, x_2- u t^b) dt
\ge \frac{\eps_b}{3}  \delta,  \quad x\in V_{\delta}(u).\]
By rescaling in the second variable we have for every $r>0$ that $$\|\cH^{U}\|_{L^p\to L^p} \ge
\|\cH^{U^r}\|_{L^p\to L^p},$$
where $U^r=r^{-1}U\cap [1,2]$. Let $U^r(\delta)$ be a maximal $2^b\delta$--separated  subset of $U^r$, then $\# U^r(\delta) \gc N(U^r,\delta)$. This implies  
\[\cH^{U^r(\delta)} f_\delta (x) \gc\delta 
\text{ for }  x\in V_{r, \delta}:=
\bigcup_{u\in U^r(\delta)}V_{\delta}(u).\]
For different $u_1, u_2\in U^r(\delta)$ the sets 
$V_{\delta}(u_1)$ and $V_{\delta}(u_2)$
are disjoint and therefore we have $\meas(V_{r,\delta})\gc \delta\#(U_r(\delta))$. Hence  we get
$$\|\cH^{U^r(\delta)} f_\delta \|_p \ge c\delta^{1+1/p} \#(U_r(\delta))^{1/p}.$$ Since also 
$\|f_\delta\|_p\lc \delta^{2/p}$ we obtain 
$$\|\cH^{U}\|_{L^p\to L^p} \ge
\|\cH^{U^r(\delta)}\|_{L^p\to L^p} \gc \delta^{1-\frac 1p} \#(U^r(\delta))^{\frac 1p} 
\gc \delta^{1-\frac 1p} N(U^r,\delta)^{\frac 1p}$$ 
which gives the  uniform     lower bound
\Be\|\cH^U\|_{L^p\to L^p} \gc \cK_p(U,\delta)\Ee for sufficiently small $\delta$.  

\newcommand{\etalchar}[1]{$^{#1}$}


\begin{thebibliography}{22}

\bibitem{Carbery88}  Anthony Carbery. \emph{Differentiation in lacunary directions and an extension of the Marcinkiewicz multiplier theorem.} Ann. Inst. Fourier (Grenoble) 38 (1988), no. 1, 157--168. 

\bibitem
{CWW85} S.-Y. A. Chang; J.M. Wilson;  T.H. Wolff. \emph{Some weighted norm inequalities concerning the Schr\"odinger operators. } Comment. Math. Helv. 60 (1985), no. 2, 217-246.


\bibitem
{DD14} Ciprian Demeter;  Francesco Di Plinio. \emph{Logarithmic $L^p$ bounds for maximal directional singular integrals in the plane.} J. Geom. Anal. 24 (2014), no. 1, 375-416.


\bibitem{DGTZ18} Francesco Di Plinio; Shaoming  Guo; Christoph Thiele; Pavel  Zorin-Kranich.
	\emph{Square functions for bi-Lipschitz maps and directional operators.}
	J. Funct. Anal. 275 (2018), no. 8, 2015--2058.

\bibitem{GHLR17}  Shaoming Guo; Jonathan Hickman; Victor Lie; Joris Roos. \emph{Maximal operators and Hilbert transforms along variable non-flat homogeneous curves.} Proc. Lond. Math. Soc. (3) 115 (2017), no. 1, 177-219.

\bibitem{grsy-first} Shaoming Guo; Joris Roos; Andreas Seeger; Po-Lam Yung. \emph{A maximal function for families of Hilbert transforms along homogeneous curves.} Preprint, arXiv:1902.00096. Published online in  Math. Ann., doi.org/10.1007/00208-019-01915-3


 
\bibitem{Kar07} G.A. Karagulyan,  \emph{On unboundedness of maximal operators for directional Hilbert transforms.} Proc. Amer. Math. Soc. 135 (2007), no. 10, 3133-3141.

\bibitem{MR} Gianfranco Marletta; Fulvio Ricci. \emph{Two-parameter maximal functions associated with homogeneous surfaces in $\bbR^n$.} Studia Math. 130 (1998), no. 1, 53-65.

\bibitem{NSW}  Alexander Nagel;   Elias M. Stein; Stephen Wainger. \emph{ Differentiation in lacunary directions.} Proc. Nat. Acad. Sci. U.S.A. 75 (1978), no. 3, 1060--1062. 

\bibitem{STW03} 
Andreas Seeger; Terence  Tao; James  Wright. \emph{Endpoint mapping properties of spherical maximal operators.} J. Inst. Math. Jussieu 2 (2003), no. 1, 109-144. 

\bibitem{SWW95} 
 Andreas Seeger; Stephen Wainger;  James Wright. \emph{Pointwise convergence of spherical means.} Math. Proc. Cambridge Philos. Soc. 118 (1995), no. 1, 115-124. 

\bibitem{SWW97} \bysame.
\emph{Spherical maximal operators on radial functions.} Math. Nachr. 187 (1997), 241--265. 


\end{thebibliography}
\end{document}